\documentclass[a4paper,10pt]{article}
\usepackage[latin1]{inputenc}
\usepackage[english]{babel}
\usepackage{amsmath,amsthm,amsfonts,amssymb,amstext} 
\usepackage{hyperref} 
\usepackage{latexsym} 
\usepackage{fancyhdr} 
\usepackage{listings}
\usepackage[final]{graphicx}
\usepackage{array}
\graphicspath{{./fig/}}
\usepackage{setspace}
\usepackage{appendix}
\usepackage{verbatim} 
\usepackage[usenames]{color} 
\usepackage{algorithm,algorithmic}
\usepackage{microtype} 
\usepackage{subfig}
\usepackage{multirow} 
\usepackage{cite}
\usepackage{sidecap}
 

\hfuzz2pt 


\newtheorem{theorem}{Theorem}[section]

\newtheorem{lemma}[theorem]{Lemma}
\newtheorem{proposition}[theorem]{Proposition}
\newtheorem{corollary}[theorem]{Corollary}
\theoremstyle{definition}
\newtheorem{definition}[theorem]{Definition}
\theoremstyle{remark}

\newtheorem{example}[theorem]{Example}

\def\bbrkt#1\ebrkt{\left\{\begin{aligned}\begin{array}{ll}#1\end{array}\end{aligned}\right.}
\def\bbrktT#1\ebrktT{\left\{\begin{aligned}\begin{array}{llll}#1\end{array}\end{aligned}\right.}

\makeatletter
\let\S\@undefined
\makeatother

\widowpenalty = 10000
\clubpenalty = 10000
\setlength{\displaywidowpenalty}{10000}


\DeclareMathOperator{\rank}{rank}
\DeclareMathOperator{\maxi}{max}
\DeclareMathOperator{\conv}{conv}
\DeclareMathOperator{\spaan}{span}

\newcommand{\D}{\Delta(\mathcal{L}_M - \{ \hat{0},\hat{1}\})}
\newcommand{\N}{\mathcal{N}(\mathcal{L}_M,G_{min})}
\newcommand{\Mw}{M_{\omega}}
\newcommand{\R}{\mathbb{R}}

\usepackage{tikz}
\usetikzlibrary{matrix,arrows}
\usetikzlibrary{arrows,shapes,automata,backgrounds,petri}
\tikzstyle{knoten}=[fill,shape=circle,inner sep=2pt,outer sep=0pt,minimum size=2pt]
\tikzstyle{background}=[rectangle,
                                                fill=gray!10,
                                                inner sep=0.2cm,
                                                rounded corners=5mm]
\usepackage{graphicx}

\title{New light on Bergman Complexes by decomposing matroid types}
\author{Martin Dlugosch}
\usepackage{accents}

\begin{document}
\maketitle
\begin{abstract}
Bergman complexes are polyhedral complexes associated to matroids. 
Faces of these complexes are certain matroids, called matroid types, too. 
In order to understand the structure of these faces we decompose matroid types into direct summands. 
Ardila/Klivans proved that the Bergman Complex of a matroid can be subdivided 
into the order complex of the proper part of its lattice of flats. 
Beyond that Feichtner/Sturmfels showed that the Bergman complex can even be subdivided to the even coarser nested set complex. 
We will give a much shorter and more general proof of this fact.
Generalizing formulas proposed by Ardila/Klivans and Feichtner/Sturmfels for special cases, 
we present a decomposition into direct sums working for faces of any of these complexes.  
Additionally we show that it is the finest possible decomposition for faces of the Bergman complex. 
\end{abstract}

\section{Introduction}
\label{sec1}
Let $V$ be a r-dimensional subspace of the n-dimensional vector space $\mathbb{C}^n$. 
The set of vectors 
\begin{displaymath}
 (\log |v_1|,\ldots,\log |v_n|)~ \in \R^n,  
\end{displaymath}
for $v_1,\ldots,v_n$ running through all non-zero elements of $\mathbb{C}^n$, is called the \emph{amoeba} of $V$. 
The limit set of these amoebas, for bases of the logarithm approaching zero, is a polyhedral fan called the \emph{Bergman fan of $V$}. 
It first appeared in the original paper of Bergman \cite{BE71} as \emph{logarithmic limit set of $V$}. 
The study of these spaces is stated as \emph{tropical geometry}. 

Matroid theory comes in when assigning a matroid to $V$ by setting its circuits $C$ as the minimal sets 
for which there are linear forms of the form 
$\sum_{i\in C} a_i x_i$ vanishing on $V$. 
For introductory references on matroid theory see \cite{Ox11}. 
In fact the Bergman fan of $V$ just depends on this associated matroid \cite{STU02}. 
It is the set of all vectors $\omega=(\omega_1,\ldots,\omega_n)\in \R^n$ such that for every circuit $C$ 
the minimum of the set $\{ \omega_i | i \in C\}$ is attained at least twice. 
Although we cut our own path of defining it, from this one can already see that 
the Bergman fan is invariant under translation along $\R(1,\ldots,1)$ and positive scaling. 
Hence we lose no information when restricting to the sphere 
$\mathbb{S}=\{ \omega \in R^n : \sum_{i=1}^n \omega_i=~0~,~\sum_{i=1}^n \omega_i^2 =1\}$. 
This restriction is a polyhedral complex called \emph{Bergman complex}. 

Section \ref{sec2} is devoted to the gathering of current knowledge about matroid polytopes and Bergman fans. 
Most of it is found in \cite{FS04} and \cite{AK}. 
Afterwards we take a look at the concept of nested set complexes by Feichtner and Kozlov \cite{FK04} in Section \ref{sec3}. 

The original result of Ardila and Klivans \cite{AK} is that the Bergman complex of a matroid $M$ can be subdivided to a realisation of the 
order complex of the proper part of the lattice of flats of $M$. 
The latter complex is well known \cite{edWhite}. 
This result was sharpened by Feichtner and Sturmfels \cite{FS04} by the fact that Bergman complexes can even be subdivided to realizations 
of the even coarser nested set complexes of their respective lattices of flats. 
Comparing faces of all these complexes by focusing on their vertices, 
we give a new, much shorter proof of the latter result in Section \ref{sec4}. 

Both Ardila/Klivans \cite[Prop. 2]{AK} and Feichtner/Sturmfels \\ \cite[Thm.~4.4]{FS04} gave formulas for the supporting matroid types 
of faces of the order complex respectively the nested set complex in terms of a decomposition into direct sums. 
In Section \ref{sec5} we generalize both these formulas giving such a decomposition that even works for faces of the Bergman complex. 
Taking a closer look we prove that the decomposition for faces of the Bergman complex is the finest one can get \textit{i.e.} 
summands are connected. 

In the end we give an interesting outlook to further research. 
It kind of all comes down to decomposition of intervals in a poset. 
The aim of Section \ref{sec6} is to describe methods how one could generalize the concepts used in this paper 
for defining a Bergman complex for arbitrary lattices or maybe even for arbitrary posets. 

\section{Matroid polytopes, the Bergman complex and nested sets}
\label{sec2}
We start with a geometric approach to matroids due to Gel'fand, Goresky, MacPhersen and Serganova \cite{GelGor}. 
Let $M$ be a family of subsets of size $r$ of a ground set $\{ a_1,\ldots,a_n \} $. 
Each subset $b$ can be represented by its incidence vector in $\R^n$, \textit{i.e.} the j-th coordinate is 1 
iff $a_j \in b$ and 0 otherwise. 
Now we can identify $M$ with the convex hull of its elements as incidence vectors:
\begin{displaymath} P_M := \conv \{e_b ~:~ b \in M    \}.\end{displaymath}
This yields a convex polytope in $\R^n$. Since the generating vertices all lie on the simplex 
$\Delta:=\left\{ (x_1, \ldots , x_n) \in \R^n ~:~0 \leq x_i \text{ for all } i, ~\sum_{i=1}^n x_i =r \right\} $, 
the dimension is limited by $n-1$. 
\begin{definition}
Let $M$ be a family of $r$-element subsets of the ground set $\{ a_1,\ldots,a_n \} $. 
If every edge of the polytope $P_M$ is parallel to 
$e_i -e_j$ for some $1\leq i,j \leq n$ we call $M$ a \emph{matroid}. The elements of $M$ are called bases of the matroid. The polytope $P_M$ is 
the \emph{matroid polytope}.
\end{definition}
Let $M$ be a matroid with ground set $E(M)$. A subset $I\subseteq E(M)$ is
\emph{independent} 
if $I\subseteq b$ for a basis $b \in M$. Hence, bases are the maximal independent sets. If not independent we call a subset of $E(M)$
\emph{dependent}. The \emph{rank of a matroid} is simply $r$, the cardinality of its bases. 
The \emph{rank of a subset} of $E(M)$ is the cardinality of its largest independent subset. Hence, a set is independent iff its 
cardinality equals its rank. 
With \emph{circuits} we denote minimal dependent sets with respect to inclusion. 
A \emph{flat} $F$ of $M$ is a subset such that there is no circuit $c$ of $M$ with $|c \backslash F|=1$. 
Another way to look at this is that 
every new element we add to $F$ is increasing the rank. 
The \emph{span} of a subset $G$ is the intersection of all flats containing $G$. 
It is the smallest flat which contains $G$.
We can construct it, by adding all the elements of $E(M) \backslash G$ to $G$ which do not increase the rank of $G$. 
The collection of flats of $M$ can be ordered by inclusion.
 The resulting poset is a lattice by setting $F_1 \wedge F_2 := F_1 \cap F_2$ and $F_1 \vee F_2 := \spaan(F_1 \cup F_2)$. We call it the 
geometric lattice $\mathcal{L}_M$ of the matroid $M$.  
A matroid $M$ is called \emph{loopless} if
$\bigcup\limits M = \bigcup\limits_{b \in M} b = E(M)$.

Let $F$ and $G$ be flats of $M$ such that $F$ is contained in $G$. We create a new matroid on the ground set $G \backslash F$ by setting:
\begin{displaymath}M[F,G]:=\{ b \cap (G\backslash F) ~:~ b\in M , |b\cap F| = \rank(F), |b\cap G|=\rank(G)\}.\end{displaymath}
The geometric lattice $\mathcal{L}_{M[F,G]}$ is isomorphic to the interval $[F,G]$ in $\mathcal{L}_M$. 

There are two special cases of this we want to bring up. The first is $F=\hat{0}$. We will call $M[\hat{0},G]$ the \emph{restriction} 
of $M$ to $G$.
Notice that the rank of the matroid $M[\emptyset,G]$ equals $\rank(G)$. 
The other special case is called \emph{contraction} and describes the case of $G=\hat{1}=E(M)$. 
Note that $M[F,E(M)]$ is loopless iff $F$ is a flat of $M$.


There is an equivalence relation on the 
ground set of a matroid defined as follows: Two elements $x$ and $y$ are equivalent if $x=y$ or there is a circuit of $M$ which contains both $x$ and $y$.
The proof that this is an equivalence relation is given in \cite[124-125]{Ox11}, like many other helpful statements for matroids. 
Note that connected matroids are loopless for ground sets of cardinality of at least 2. 
The equivalence classes are the \emph{connected components} of $M$. 
Let $c(M)$ denote the number of connected components of $M$. We say a matroid is \emph{connected} iff $c(M)\leq 1$. 
The case of $c(M)=0$ belongs solely to the empty matroid which is the matroid with empty ground set.
 
We say a flat $F$ of $M$ is \emph{connected} if its restriction $M[\hat{0},F]$ is connected. Dual to this, it is called 
\emph{co-connected} if its contraction $M[F,\hat{1}]$ is connected. 

\begin{proposition}
 The dimension of the matroid polytope $P_M$ is $n-c(M)$. 
\end{proposition}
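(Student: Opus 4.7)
The plan is to reduce the general statement to the connected case via the direct sum decomposition along connected components, and then handle the connected case by a matching upper/lower bound argument.

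First I would show that if $E(M)=E_1\sqcup\cdots\sqcup E_k$ is the partition into connected components (so $k=c(M)$), then every basis $b$ of $M$ satisfies $|b\cap E_j|=r_j$, where $r_j$ is the rank of the restriction $M_j:=M[\hat 0,\overline{E_j}]$, and conversely every union $b_1\cup\cdots\cup b_k$ of bases $b_j$ of $M_j$ is a basis of $M$. This is standard from the fact that circuits of $M$ lie entirely inside one component (by definition of the equivalence relation), so independence and rank split over components. Consequently $P_M$ is a Cartesian product $P_{M_1}\times\cdots\times P_{M_k}$ living in the orthogonal coordinate subspaces $\R^{E_j}$, and therefore $\dim P_M=\sum_{j=1}^k \dim P_{M_j}$. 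This reduces the problem to showing that a connected matroid $M'$ on a ground set of size $m$ has $\dim P_{M'}=m-1$.

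For the upper bound in the connected case, every vertex $e_b$ of $P_{M'}$ lies on the hyperplane $\sum_{i=1}^m x_i=r$, so $\dim P_{M'}\le m-1$. For the matching lower bound, I would analyze the directions of edges of $P_{M'}$ incident to a fixed vertex $e_{b_0}$. By the defining property of a matroid these edge directions all have the form $e_j-e_i$, and by basis exchange they correspond to pairs $(i,j)$ with $i\in b_0$, $j\notin b_0$, such that $b_0\setminus\{i\}\cup\{j\}$ is again a basis. I need to show that the span of all such vectors $e_j-e_i$ obtainable through iterated exchange starting from $b_0$ equals the whole hyperplane $\{x:\sum x_i=0\}\subset\R^m$, which has dimension $m-1$.

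The main obstacle is this last span computation; it is where connectedness of $M'$ really enters. My plan is to show that the relation ``$i\sim j$ iff the edge direction $e_j-e_i$ belongs to the span of edges realized at some vertex of $P_{M'}$'' is an equivalence relation on $E(M')$, and that its classes coincide with the connected components of $M'$. The inclusion of classes into components is automatic, and the reverse inclusion amounts to the statement that if $i,j$ lie on a common circuit $C$, then an appropriate pair of bases realizes $e_j-e_i$: choose a basis $b$ with $b\cap C=C\setminus\{i\}$, exchange $i$ with any other element of $C$, and use transitivity along $C$ to connect $i$ and $j$. Once this is in place, connectedness of $M'$ forces a single equivalence class and hence an $(m-1)$-dimensional span, completing the connected case. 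Finally, assembling the pieces gives $\dim P_M=\sum_{j=1}^k(|E_j|-1)=n-c(M)$.
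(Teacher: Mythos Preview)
Your argument is correct and rests on the same key fact the paper uses, namely that $e_i-e_j$ occurs as an edge direction of the matroid polytope if and only if $i$ and $j$ lie on a common circuit. The organization differs: the paper works directly with an arbitrary $M$ and computes both bounds at once by identifying the linear span of the edge directions with $\operatorname{span}\{e_i-e_j : i\sim j\}$, whose dimension is $n-c(M)$; you instead first split $P_M$ as a product over connected components and then, in the connected case, get the upper bound from the hyperplane $\sum x_i=r$ and the lower bound from the edge analysis. Your route is a bit longer but perfectly valid; the product step is in fact what the paper invokes immediately after this proposition. One small simplification in your lower bound: once you pick a basis $b$ with $b\cap C=C\setminus\{i\}$, the set $b\cup\{i\}$ has $C$ as its unique circuit, so $b':=(b\cup\{i\})\setminus\{j\}$ is already a basis and $e_b-e_{b'}=e_j-e_i$ directly, with no need for ``transitivity along $C$''.
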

\begin{proof}
The linear space parallel to the affine space spanned by the matroid polytope is generated by vectors of the form $e_i-e_j$. An edge of $P_M$ 
is parallel to such a difference of unit vectors iff the vertices at the end of that edge represent bases $b,b'$ just differing in 
the two elements $a_i$ and $a_j$. But this is in fact an equivalent condition to the existence of a circuit containing both. 
So $\text{dim}(P_M) \leq n-c(M)$.

Conversely, 
every pair of connected elements $a_i,a_j$ grants the existence of two bases $b,b'$ differing just in these elements. So there is an edge of $P_M$
parallel to $e_i-e_j$. Thus $\text{dim}(P_M) \geq n-c(M)$.
\end{proof}

From here on we are focusing on connected matroids. Every non-connected matroid can be decomposed into a direct sum of connected matroids. 
Then the matroid polytope of the original matroid is just the product of the matroid polytopes of the direct summands.

\newpage
Our goal now is to describe the matroid polytope 
by a system of linear (in-)equalities.
\begin{proposition}
 For a connected matroid $M $ of rank $r$ the matroid polytope has the form:
\begin{displaymath} P_M = \left\{  (x_1,\ldots , x_n) \in \Delta ~:~ \sum_{i \in F} x_i \leq \rank(F) \text{ for all flats $F$ of $M$} \right\}. 
\end{displaymath}
\end{proposition}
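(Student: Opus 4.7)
The plan is to prove the two inclusions separately, with the forward inclusion being routine and the reverse inclusion relying on a standard result.

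For the easy direction $P_M \subseteq \{\dots\}$, I would note that every basis $b \in M$ gives an incidence vector $e_b$ that satisfies all of the flat inequalities: for any flat $F$, the intersection $b \cap F$ is a subset of a basis, hence independent, so
\[
\sum_{i \in F} (e_b)_i \;=\; |b \cap F| \;\leq\; \rank(F).
\]
Since each of these linear inequalities, together with the simplex constraints defining $\Delta$, is preserved under taking convex combinations, every point of $P_M = \conv\{e_b : b \in M\}$ satisfies them.

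For the harder direction $\{\dots\} \subseteq P_M$, I would invoke Edmonds' matroid polytope theorem, which characterises $P_M$ as the set of $x \in \R^n$ with $x_i \geq 0$, $\sum_i x_i = r$, and
\[
\sum_{i \in S} x_i \;\leq\; \rank(S) \quad \text{for every subset } S \subseteq E(M).
\]
Granted this, it suffices to show that for $x \in \Delta$ the \emph{flat} inequalities already imply the full family of \emph{subset} inequalities. For an arbitrary $S \subseteq E(M)$ let $\bar S = \spaan(S)$ be its closure. Then $\bar S$ is a flat, $S \subseteq \bar S$, and $\rank(\bar S) = \rank(S)$ (adding elements that do not increase the rank was exactly how $\bar S$ was constructed in Section~\ref{sec2}). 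Using $x_i \geq 0$ componentwise,
\[
\sum_{i \in S} x_i \;\leq\; \sum_{i \in \bar S} x_i \;\leq\; \rank(\bar S) \;=\; \rank(S),
\]
so the flat inequalities are enough. Combined with the $\Delta$-constraints already present, Edmonds' theorem then places $x$ inside $P_M$.

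The main obstacle is, of course, Edmonds' theorem itself; everything else is bookkeeping via closure. If one wants a self-contained argument rather than a citation, the cleanest route is to show directly that every vertex of the polytope cut out by the flat inequalities and $\Delta$ is a $0/1$-vector whose support is a basis of $M$. This can be done by an exchange/greedy argument: pick a vertex $x^\ast$, set $T = \{i : x^\ast_i > 0\}$, and use the tightness of flat inequalities at $x^\ast$ together with the rank-submodularity of $M$ to force $x^\ast \in \{0,1\}^n$ and $T$ independent of size $r$. Connectedness of $M$ enters here to guarantee that the affine hull of the polytope is exactly the hyperplane $\sum_i x_i = r$ inside $\R^n$, so that no hidden equalities are missed in the description.
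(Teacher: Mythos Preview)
Your argument is correct, but the paper takes a different, more polyhedral route that avoids citing Edmonds' theorem. Working from its own definition of a matroid via the edge directions of $P_M$, the paper observes that every edge of a facet is parallel to some $e_i - e_j$, so the facet normal is constrained by relations of the form $a_i = a_j$; since a facet normal is determined up to scalar (modulo the all-ones direction coming from $\sum_i x_i = r$), this forces it to be a $0/1$-vector $e_A$. Maximizing $\sum_{i\in A} x_i$ over the vertices of $P_M$ then gives $\rank(A)$ as the right-hand side, and the same closure step you use replaces $A$ by the flat $\spaan(A)$. In short, the paper derives the facet description directly from the edge structure, while you import it wholesale via Edmonds and supply only the reduction from subsets to flats. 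Your approach is shorter and leans on a standard reference; the paper's is self-contained within the polytopal definition of matroid it has adopted and makes explicit \emph{why} the facet normals are incidence vectors. As a side remark, connectedness is not actually used for the set equality in either argument---the paper carries the hypothesis only because it has restricted attention to connected matroids, not because the proof needs it---so your closing comment about where connectedness enters is a little off the mark.
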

\begin{proof}
 It is enough to consider the facets of $P_M$, since they are bounding the polytope. 
Let us assume a facet defining inequality is $\sum_{i=1}^n a_i x_i \leq c$, where $n$ is the cardinality of $E(M)$. What we know from 
the definition of matroids is that all the edges are parallel to vectors $e_i -e_j$. We can compute the normal vector of the facet by 
looking at the edges it has to be perpendicular to. The constraints the edges impose are all of the form $a_i=a_j$. Since the normal vector is 
uniquely determined by these constraints up to scalar multiples we can think of the $a_i$ to be either 0 or 1. So the inequality reduces to 
$\sum_{i \in A} x_i \leq c'$ for some $A \subseteq E(M)$ and some $c'\in \R$. But what is the maximum the linear form $\sum_{i\in A} x_i$ 
can attain? As a linear form its maximum has to be attained at some face of $P_M$. Pick any vertex of this face and evaluate the 
linear form on it. We obtain that $c'=\maxi\ \{ |b \cap A |~:~ b \in M \} =\rank(A)$. 
What is left is to show that we just have to pick the subsets $A$  which are flats. Let $\spaan(A)$ be the flat spanned by $A$. Of course 
$A \subseteq \spaan (A) $ and $\rank(A)$=$\rank(\spaan(A))$ hold.
\begin{displaymath} \sum_{i \in A} x_i \leq \sum_{i \in \spaan(A)} x_i \leq \rank ( \spaan(A) ) = \rank(A) \end{displaymath} 
So the inequality of $\spaan(A)$ in the middle already implies the inequality of~$A$. 
\end{proof}
A very important fact, still easy to see, is that every face of $P_M$ is a matroid polytope, too, because its edges are still parallel to some
difference of standard basis vectors. Assume that, for $\omega \in \R^n$, the linear form 
$\sum_{i =1}^n \omega_i x_i $ attains its maximum in $P_M$ at that chosen face.
Notice that this linear form can differ from the linear forms in the proposition above. 
It is not uniquely determined by the chosen face. Considering this we will construct the Bergman complex soon.  
The bases of this face are exactly the bases of $M$ for which the linear form 
$\sum_{i =1}^n \omega_i x_i $ attains its maximum. 
From an algorithmic point of view the bases are exactly the possible outputs of the greedy algorithm with weight $\omega$. 
The matroid $\Mw$ is called the \emph{matroid type} of the face.

Since we can represent the matroid polytope $P_M$ through a system of linear inequalities indexed by the flats of $M$
we want to filter which of them define facets of the matroid polytope. 
The question is what 
combinatorial property do flats have whose linear form attains its maximum at a facet of the matroid polytope. 
We will call these special flats \emph{flacets} of $M$. 

So the matroid polytope $P_M$ is bounded by the inequalities of the flacets and 
possibly the inequalities of the form $x_i \geq 0$. 
Especially matroids are determined by their flacets. 

\newpage
\label{maxfaces}
\begin{proposition}
 A flat $F$ of $M$ is a flacet iff it is connected and co-connected. 
\end{proposition}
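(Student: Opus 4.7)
The plan is to identify the face of $P_M$ cut out by the equality $\sum_{i\in F} x_i = \rank(F)$ as the matroid polytope of a direct sum, and then apply the dimension formula from the previous proposition. Throughout, one assumes $F$ is a proper flat, since the inequalities for $F=\hat{0}$ and $F=\hat{1}$ are trivial and cannot define facets.

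First I would show that the vertices of this face are precisely the vectors $e_b$ for bases $b$ with $|b\cap F|=\rank(F)$. Such a basis decomposes uniquely as the disjoint union of a basis of the restriction $M[\hat{0},F]$ and a basis of the contraction $M[F,\hat{1}]$, and conversely every such pair combines to an honest basis of $M$. Hence the face is (a translate of) the matroid polytope $P_{M[\hat{0},F]\oplus M[F,\hat{1}]}$, where the direct sum behaves as a product of matroid polytopes as noted right after the previous proposition.

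Next I would apply the dimension formula, together with the additivity $c(M_1\oplus M_2)=c(M_1)+c(M_2)$, to conclude that this face has dimension $n - c(M[\hat{0},F]) - c(M[F,\hat{1}])$. Since $M$ is connected, $\dim P_M = n-1$, so $F$ is a flacet iff
\[
c(M[\hat{0},F]) + c(M[F,\hat{1}]) = 2.
\]
Because $F$ is a proper flat, both matroids $M[\hat{0},F]$ and $M[F,\hat{1}]$ have non-empty ground set and therefore at least one connected component each. The displayed equation thus holds iff each of them is connected, which by definition means exactly that $F$ is connected and co-connected.

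The main obstacle will be the direct sum step: one has to argue that every basis of $M[\hat{0},F]$ can indeed be completed, by any basis of $M[F,\hat{1}]$, to a basis of $M$, so the vertex set of the face truly factors as a product. This amounts to a basis exchange argument using the flat property of $F$ and the fact that $F$ has full rank $\rank(F)$ in every such basis. Once this factorisation is in hand, the proposition reduces to arithmetic on connected components.
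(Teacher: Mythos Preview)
Your proposal is correct and follows essentially the same route as the paper: identify the face cut out by $\sum_{i\in F}x_i=\rank(F)$ with the matroid polytope of $M[\hat 0,F]\oplus M[F,\hat 1]$, apply the dimension formula $\dim P_N=n-c(N)$ to this direct sum, and use $\dim P_M=n-1$ to reduce the facet condition to $c(M[\hat 0,F])+c(M[F,\hat 1])=2$. The paper's proof is slightly more terse in asserting the direct-sum description $M_{e_F}=M[\emptyset,F]\oplus M[F,E(M)]$ without spelling out the basis-completion argument you flag as the main obstacle, but the structure of the two proofs is the same.
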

\begin{proof}
We already saw that for connected matroids the matroid polytope has dimension $n-1$. So facets of this pure polyhedral complex have dimension $n-2$. 
We just have to look out for those flats $F$ such that the matroid polytope of the matroid type $M_{e_F}$ has dimension $n-2$.
We know what the bases of $M_{e_F}$ are. They are exactly the bases of $M$ such that $|b \cap F| = \rank(F)$ holds. We can express this 
matroid type through the constructions of restriction, contraction and direct sum: 
\begin{displaymath} M_{e_F} = M[\emptyset,F] \oplus M[F,E(M)] = \{ b\in M : |b \cap F|= \rank(F) \} \subseteq M .\end{displaymath}
Since this matroid type is a direct sum, its matroid polytope, which is a facet of $P_M$, is a product of the matroid polytopes of 
$M[\emptyset,F]$ and $M[F,E(M)]$. So~the dimension of the face defined by $F$ must equal the sum of the dimensions of the matroid polytopes 
which is 
\begin{displaymath}|F|- c(M[\emptyset,F]) + n-|F|-c(M[F,E(M)])= n - c(M[\emptyset,F]) -c(M[F,E(M)]).\end{displaymath}
From this we can see that the dimension $n-2$ appears iff  \begin{displaymath}c(M[\emptyset,F]) + c(M[F,E(M)]) = 2.\end{displaymath} 

So if $F$ is neither the empty nor the whole set both summands have to equal 1 
and this means the flacets are exactly those flats for which both restriction and contraction are non-empty 
connected matroids. 
\end{proof}

\bigskip
\label{firstdecomp}
\noindent
The idea of decomposing the matroid type $\Mw$ in as fine of a direct sum as possible is the aim of Section \ref{decompositionchapter}. 
The case of $M_{e_F}$ above is a first example of this. 

Recall that the linear form $\sum_{i=1}^n \omega_i x_i $ 
attaining its maximum at a certain face of $P_M$ is not uniquely determined. 
For $c \in \R $ and $c^+ \in \R^+$  the linear forms 
$\sum_{i=1}^n (\omega_i \cdot c^+) x_i $ and $\sum_{i=1}^n (\omega_i+c) x_i $ induce the same matroid type. 
So we still get all the different matroid types when restricting $\omega$ to elements of the unit ($n-2$)-sphere contained in 
a hyperplane orthogonal to $(1,\ldots,1)$, which is 
$\mathbb{S}=\{ \omega \in R^n : \sum_{i=1}^n \omega_i=~0,\sum_{i=1}^n \omega_i^2 =1\}$. 
The only exception is the matroid type of $\omega = (0,\ldots,0)$ 
which is simply $M$ again. 
Later we will view this as the matroid type of the empty face of the Bergman complex.

Consider the following equivalence relation on $S$:~$\omega \sim \omega' $ iff the induced matroid types $\Mw$ and $M_{\omega'}$ coincide. 
The equivalence classes are relatively open convex polyhedral cones. These cones form a complete fan in $\R^n$. It is the normal fan of $P_M$. 
The equivalence classes define a spherical subdivision of $\mathbb{S}$. 
This subdivision is isomorphic to the boundary of the polar dual $P_M^*$ of the 
matroid polytope. 
For simplicity of notation, we will identify the face of $\partial P_M$ just like its dual in $\partial P_M^*$ with its 
matroid type $\Mw$. 
\begin{definition}
The \emph{Bergman Fan} $\widetilde{B}(M)$ is the subfan of the inner fan of $P_M$ consisting of the matroid types which are loopless. 
The \emph{Bergman Complex} $B(M)$ is the intersection  $\widetilde{B}(M)\cap \mathbb{S}$ where 
$\mathbb{S}=\{ \omega \in \R^n : \sum_{i=1}^n \omega_i= 0~,~\sum_{i=1}^n \omega_i^2 =1\}$. 
\end{definition}

After this geometric realisation let us reduce the Bergman complex to its combinatorial data, \textit{i.e.} to its face poset.
The faces of the Bergman complex $B(M)$ are the matroid types $\Mw$ which are loopless. Since the matroid types are subsets of bases 
of $M$ they come with the natural partial order of inclusion. 
The order of the face poset of the Bergman complex is the dual of this order. So the face $\Mw$ is contained in the face 
$M_{\omega'}$ in $B(M)$ iff the reversed inclusion holds for $\Mw$ and $M_{\omega'}$ as subsets of $M$. 

\bigskip 
Let us consider what the vertices of the Bergman complex are. Apart from the empty face 
they are the minimal faces of $P_M^*$ \textit{i.e.} the maximal faces of $P_M$, whose matroid types are loopless. 
But obviously the matroid type of a face of $P_M$ is loopless iff the face is not contained in one of the hyperplanes of the form $x_i=0$. 
So the vertices of $B(M)$ are duals of facets of $P_M$ whose hyperplanes are not of the form $x_i=0$. 
We already determined what these are, namely the flacets of $M$. 
Since every face of $P_M$ is uniquely determined by the set of facets of the polyhedral complex which contain it, 
every face of $B(M)$ is uniquely determined by its vertices.
             
\begin{example}
\label{ex2}
 Let $M$ be the matroid with ground set $\{ 1,2,3,4,5,6\}$ and circuits $ \{1,2,3,4 \},\{1,2,5,6 \}$ and $\{ 3,4,5,6\}$. 
Hence the bases are all subsets of size four except for the circuits. 
There are two types of flacets here. 
On the one hand there are the singletons \textit{i.e.} the subsets of size one. 
On the other hand there are the three circuits itself. 

The Bergman complex is a pure polyhedral complex of dimension two. 
There are two kinds of facets of the Bergman complex. 
There are twenty triangles but there are also three quadrangles, whose vertices are shortly notated: 
\begin{displaymath}
 1,2,1234,1256 ~\quad~ 3,4,1234,3456 ~\quad~ 5,6,1256,3456 .
\end{displaymath}
This shows that the Bergman complex can be non-simplicial. 
\end{example}
                                                                                                    
\section{Nested set complexes}
\label{sec3}

Beside order complexes this is another way of creating a simplicial complex from a geometric lattice 
due to Feichtner and Kozlov \cite{FK04}. 

For a semi-meet lattice $\mathcal{L}$ let intervals in $\mathcal{L}$ be denoted by 
$[X,Y]:= \{Z \in~\mathcal{L}:  X~\leq~Z~\leq Y \}$. For any $X \in \mathcal{L}$ and any subset 
$S\subseteq \mathcal{L}$ write 
$S_{\leq X}:= \{ Y \in S : Y \leq X\}$. The same way we can define $S_{< X},S_{\geq X} $ and $S_{> X}$. Last but not least, the set of
maximal elements in $S\subseteq \mathcal{L}$ is denoted by $\maxi S$.
\begin{definition}
 For a finite lattice $\mathcal{L}$ a subset $\mathcal{G}$ in $\mathcal{L}_{> \hat{0}}$ is a \emph{building set} 
if for any $X\in \mathcal{L}_{> \hat{0}}$ with 
$\maxi \mathcal{G}_{\leq X}=\{ G_1,\ldots ,G_k \}$ the map 

\begin{displaymath} \Phi_X : \Pi_{j=1}^k [\hat{0} , G_j]  \longrightarrow [\hat{0},X]\end{displaymath}
induced by the inclusions of the intervals $[\hat{0} , G_j] \subseteq [\hat{0} , X]$ is an isomorphism of posets. 
\end{definition}
In colorful language, the condition means that $X$ can be decomposed into $G_1,\ldots , G_k$ and the properties of $[\hat{0} , X]$ can be 
separately investigated in the intervals $[\hat{0} , G_j]$.   

There is always a maximal and a minimal building set. The maximal one is always the whole lattice without $\hat{0}$. In this case $X$ is 
decomposed into just one factor, $X$ itself. The minimal building set $\mathcal{G}_{min}$ 
consists of all connected flats and, if $\mathcal{L}$ is not connected already, the top element $\hat{1}$. 
\begin{definition}
 Let $\mathcal{L}$ be a finite lattice and $\mathcal{G}$ a building set containing the top element $\hat{1}$. A subset $S\subseteq \mathcal{G}$ 
is called nested if for any set of 
incomparable elements $X_1,\ldots ,X_k$ of at least two elements of $S$ 
the join $X_1 \vee \ldots \vee X_k$ does not lie in the building set $\mathcal{G}$ again. Since subsets of $S$ 
fulfill the same condition, again this is a simplicial complex. Topologically, it is a cone with apex $\{ \hat{1} \} $. Its link 
$\mathcal{N}(\mathcal{L},\mathcal{G})$ is the \emph{nested set complex} of $\mathcal{L}$ with respect to the building set $\mathcal{G}$.
\end{definition}
\label{topbuildingset}
The case of the minimal building set $\mathcal{G}_{min}$ is called \emph{the} nested set complex 
$\mathcal{N}(\mathcal{L},\mathcal{G}_{min})$. If there is no hint 
about the building set, it is the minimal one. 
The other extreme is the maximal building set. Since every join of incomparable elements is an element of the building set the only
nested sets are those which are totally ordered. So the simplices are just the chains in the proper part $\mathcal{L}-\{\hat{0},\hat{1} \}$.
Thus the nested set complex with respect to the maximal building set equals the order complex of the proper part of $\mathcal{L}$.

\bigskip
A lattice is \emph{atomic} if every element is a join of atoms. Our geometric lattices are such atomic lattices. 
For simple matroids these atoms are just the elements of the ground set $E(M)$. 
For arbitrary atomic lattices Feichtner and Yuzvinsky \cite{FY04} proposed the following polyhedral realization of nested set complexes.

Let $\mathcal{L}$ be an atomic lattice with atoms $\{ a_1,\dots,a_n \}$ and $\mathcal{G}$ a building set containing $\hat{1}$. 
For any $G \in \mathcal{G}$ let $e_G \in \R^n$ be the incidence vector of $G$ respective the set of atoms 
\textit{i.e.} the i-th coordinate is 1 iff $a_1 \subseteq G$ and 0 otherwise. 
For a nested set $S$ the set of incidence vectors of its elements is linearly independent. 
Hence with $\R_{\geq 0} \{ e_G | G\in S \}$, they span a simplicial cone. 
For nested sets $S$ and $S' $ they intersect exactly in the cone belonging to the nested set $S \cap S'$. 
Thus the set of cones from nested sets form a simplicial fan. 

Just like the Bergman Fan this fan has the property that its cones are invariant under the translation along the line $\R(1,\ldots,1)$. 
So again we loose no information when restricting the fan to the (n-2)-sphere 
$\mathbb{S}=\{ \omega \in R^n : \sum_{i=1}^n \omega_i=~0~,~\sum_{i=1}^n \omega_i^2 =1\}$. 
The resulting spherical complex is a geometric realization of the nested set complex.
    
In order to compare nested set complexes of different building sets, Feichtner and Müller \cite{FM05} proved that for building sets 
$\mathcal{G}$ and $\mathcal{G}\cup \{ X \}$ the nested set complex respective the building set $\mathcal{G}\cup \{ x \}$ can be 
obtained by a stellar subdivision from the nested set complex of the smaller building set $\mathcal{G}$ at the simplex corresponding to the factors 
of $X$ respective $\mathcal{G}$. 
Recursively we can construct the order complex of the proper part $ \mathcal{L}- \{ \hat{0} , \hat{1} \} $ 
, which is the nested set complex of the maximal building set,  
from the nested set complex of the minimal building set by a sequence of stellar subdivisions. 
The single steps correspond to adding elements to the building sets in a non decreasing order. 
In particular the order complex of the proper part of $\mathcal{L}$ and \emph{the} nested set complex are homeomorphic. 

\newpage
\begin{example}
\label{ex3}
Consider the matroid of Example \ref{ex2}. 
The lattice $\mathcal{L}$ is the lattice of flats of the matroid $M$. 
Though this is not true in general, the minimal building set $\mathcal{G}_{min}$ consists exactly of the set of flacets. 
The complexes coincide except for the three squares which are each replaced by two triangles with vertices:  
\begin{displaymath}
 1,2,1234~\quad~ 1,2,1256 ~\quad~ 3,4,1234 ~ \quad~ 3,4,3456 ~\quad~ 5,6,1256~\quad~ 5,6,3456 .
\end{displaymath}
\end{example}

\section{Comparison of order complex, nested set complex and Bergman complex}
\label{sec4}
Ardila and Klivans \cite{AK} first showed that the order complex of proper part of the lattice of flats is a refinement of the Bergman complex.  
Feichtner and Sturmfels \cite{FS04} proved that this is even true for the nested set complex. 
Though we can gain a lot of insight from their proof it is a little bit complicated. 
From another point of view this can be seen easier.

\bigskip

Remember the geometric realizations of 
the order complex (nested set complex of the maximal building set), 
the nested set complex (with minimal building set) 
and the Bergman complex.

What faces of all these complexes have in common is that they are the spherical convex of their vertices. 
The vertices are the scaled incidence vectors of flats in the case of the order complex, 
connected flats in the case of the nested set complex 
and flacets in the case of the Bergman complex.  

\begin{theorem}
\label{theo1}
For any of these complexes any element $\omega$ in the cone corresponding to some face with vertices $\Gamma$ has the form 
$\sum_{F\in \Gamma} \lambda_F \cdot e_F $ for all $\lambda_F \gneq 0$. Then identifying the matroid type $\Mw$ with its set of bases:
\begin{displaymath}M_{\omega}=\left\{~ b \in M ~|~ \text{ for all }F \in \Gamma ~:~ |b \cap F|= \rank(F)\right\}.\end{displaymath}
\end{theorem}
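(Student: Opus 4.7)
The plan is to split the statement into two independent threads: first, explain why every $\omega$ in the relative interior of the cone indexed by $\Gamma$ is a strictly positive combination $\sum_{F\in\Gamma}\lambda_F\, e_F$; second, derive the description of $M_{\omega}$ from any such representation.

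For the first thread, the order complex and the nested set complex essentially come equipped with this form: the Feichtner--Yuzvinsky realization defines each simplicial cone as $\R_{\geq 0}\{e_F : F\in\Gamma\}$ (with $\Gamma$ a chain of flats, respectively a nested set), and the relative interior is by definition the locus of strictly positive coefficients. For the Bergman complex the claim is less tautological: one has to argue that the cone corresponding to a face of $B(M)$ is also the non-negative hull of the incidence vectors of its flacet vertices. This follows from the duality $\partial P_M^* \leftrightarrow \text{normal fan of } P_M$: the rays of the normal fan are the inner normals of the facets of $P_M$, and among those the loopless ones are, by the flacet characterization proved earlier, precisely the $e_F$ for $F$ a flacet. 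Since $B(M)$ is a polyhedral subfan, each of its cones is spanned by a subset of its own rays, giving the desired representation.

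For the second thread, fix $\omega = \sum_{F\in\Gamma}\lambda_F\, e_F$ with every $\lambda_F > 0$. For any $x\in P_M$ the facet inequalities give
\begin{displaymath}
\omega\cdot x \;=\; \sum_{F\in\Gamma}\lambda_F \sum_{i\in F} x_i \;\leq\; \sum_{F\in\Gamma}\lambda_F\,\rank(F),
\end{displaymath}
and, because $\lambda_F > 0$ for every $F\in\Gamma$, equality holds precisely when $\sum_{i\in F}x_i = \rank(F)$ for each such $F$. Specialising to a basis $b\in M$, this reads $|b\cap F| = \rank(F)$ for all $F\in\Gamma$. Since $M_{\omega}$ is by definition the set of bases on which $\omega\cdot x$ attains its maximum over $P_M$, and since the face corresponding to $\omega$ is non-empty (so the upper bound is actually achieved), this is exactly the claimed characterization.

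The main obstacle is the first thread for the Bergman complex: one has to be explicit that a face of $B(M)$ really is the spherical convex hull of its flacet vertices, rather than merely having flacet rays. This is handled by recalling, from the discussion of $\partial P_M$ and $\partial P_M^*$ above, that every face of the normal fan is the non-negative hull of those of its rays which it contains, and that the loopless cones among them are exactly those whose rays are flacet incidence vectors. Once this is in place, both the form of $\omega$ and the description of $M_{\omega}$ are uniform across all three complexes.
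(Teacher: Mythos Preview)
Your argument is correct and essentially the same as the paper's: both bound $\omega\cdot e_b$ above by $\sum_{F\in\Gamma}\lambda_F\,\rank(F)$ and identify $M_\omega$ as the locus where every summand is tight. Two minor differences worth noting: you keep the coefficients $\lambda_F$ explicit and invoke the facet inequalities of $P_M$ directly, whereas the paper silently sets $\lambda_F=1$ and argues by comparing $e_b\cdot\omega$ against $e_{b'}\cdot\omega$ for a basis $b'$ already satisfying the rank conditions; and you spend a paragraph justifying the form $\omega=\sum\lambda_F e_F$ for the Bergman complex via normal-fan duality, which the paper treats as understood from the preceding discussion of $\partial P_M^*$.
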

\begin{proof}
A basis $b$ of $ M$ maximizes the linear functional $\omega$ in $P_M$ 
iff for all bases $b'$ of $M$ the inequality $e_{b'} \cdot \omega \leq e_b \cdot \omega$ holds. 
First consider a basis $b$ satisfying $|b \cap F|= \rank(F)$ for all $F \in \Gamma$.            
 \begin{align*}
 e_{b'} \cdot \omega 
&=e_{b'} \cdot \left(\sum_{F \in \Gamma} e_F\right)
=\sum_{F \in \Gamma} e_{b'} \cdot e_F 
=\sum_{F \in \Gamma} |b' \cap F| \\
&\leq \sum_{F \in \Gamma} \rank(F) 
= \sum_{F \in \Gamma} |b \cap F|
= \sum_{F \in \Gamma} e_b \cdot e_F 
= e_b \cdot \left(\sum_{F \in \Gamma} e_F\right)
= e_b \cdot \omega 
\end{align*}
This shows that $b$ is at least a basis of $\Mw$. 

Conversely assume $e_{b'} \cdot \omega \leq e_b \cdot \omega$ always holds. 
Choose $b'$ such that $|b~\cap~F|= \rank(F)$ holds for all $F \in \Gamma$. 
Then the equations above teaches us that $\sum_{F \in \Gamma} |b~\cap~F| = \sum_{F \in \Gamma} \rank(F)$. 
Since for all pairs of summands the inequality $|b \cap F| \leq  \rank(F)$ holds, equality holds for them, too.  
\end{proof}

\newpage
\begin{corollary}
\label{theo2}
For all building sets $\mathcal{G}$ the nested set complex $N(\mathcal{L}_M,\mathcal{G})$ is a refinement of the Bergman fan. 
\end{corollary}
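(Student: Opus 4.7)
The plan is to apply Theorem \ref{theo1} essentially as a black box. A face of $\mathcal{N}(\mathcal{L}_M,\mathcal{G})$ corresponds to a nested set $\Gamma \subseteq \mathcal{G}\setminus\{\hat{1}\}$, whose geometric realization is the cone $C_\Gamma := \R_{\geq 0}\{e_F : F \in \Gamma\}$. Every $\omega$ in the relative interior of $C_\Gamma$ is a positive combination $\omega = \sum_{F\in\Gamma} \lambda_F\, e_F$ with all $\lambda_F > 0$, so Theorem \ref{theo1} asserts that $M_\omega$ depends only on $\Gamma$ and not on the chosen coefficients. This already gives the crucial fact: the map $\omega \mapsto M_\omega$ is constant on $\mathrm{relint}(C_\Gamma)$, hence $C_\Gamma$ is contained in a single cone of the inner normal fan of $P_M$.

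To conclude that $C_\Gamma$ actually lies inside $\widetilde{B}(M)$ rather than merely in the inner normal fan, I would check that the common matroid type $M_\omega$ is loopless. For an arbitrary atom $a \in E(M)$, I would use the building-set isomorphism $\Phi_F$ from the definition of building sets, applied recursively along the chain of flats of $\Gamma$ that contain $a$, to extend $\{a\}$ into a basis $b \in M$ satisfying $|b\cap F| = \rank(F)$ for every $F \in \Gamma$. By Theorem \ref{theo1}, such a $b$ is a basis of $M_\omega$, so $a$ is not a loop, and $C_\Gamma$ sits inside some cone of $\widetilde{B}(M)$.

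For genuine refinement one additionally needs equality of supports. Given $\omega \in \widetilde{B}(M)$, I would order the distinct values of its coordinates as $c_1 < c_2 < \cdots < c_k$, set $F_i := \{j : \omega_j \leq c_i\}$, and argue that the looplessness of $M_\omega$ together with the Bergman condition on circuits (the minimum of $\omega_i$ over any circuit is attained at least twice) forces every $F_i$ to be a flat. Rewriting this chain in terms of $\mathcal{G}$ via the factorizations $\Phi_{F_i}$ yields a nested set $\Gamma(\omega)$ with $\omega \in C_{\Gamma(\omega)}$, so the nested set fan already covers the support of $\widetilde{B}(M)$.

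The main obstacle will be the looplessness step: containment in the inner normal fan is immediate from Theorem \ref{theo1}, but ruling out loops is where the building-set axiom actually does work, through the factorization $\Phi_F$. The support comparison is then essentially an unwinding of definitions once the nested chain $\Gamma(\omega)$ has been extracted from each $\omega$ in the Bergman fan.
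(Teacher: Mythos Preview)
Your proposal is correct in outline but takes a substantially different route from the paper. The paper's proof is two sentences: it invokes Ardila--Klivans \cite{AK} as a black box for the statement that the $\omega$ with $M_\omega$ loopless are exactly those lying in cones spanned by incidence vectors of chains of flats (so the Bergman fan and the order-complex fan have the same support), and then uses Theorem~\ref{theo1} to conclude that the matroid type is constant on each nested-set cone. The equality of supports between the order complex and the nested set complex is already available from the Feichtner--M\"uller stellar-subdivision result recalled in Section~\ref{sec3}, so nothing further is needed.

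You, by contrast, propose to reprove both ingredients by hand: the looplessness of $M_\omega$ on nested-set cones via the building-set factorisations $\Phi_F$, and the support equality by extracting the level-set chain of flats from an arbitrary $\omega\in\widetilde B(M)$ and then rewriting it through $\mathcal G$. Both arguments can be carried out, and your approach has the virtue of being self-contained, but it is considerably longer and essentially amounts to redoing the Ardila--Klivans characterisation inside the proof. One point to watch in your looplessness step: you only mention the chain of elements of $\Gamma$ that contain the chosen atom $a$, but the basis $b$ you build must also satisfy $|b\cap F|=\rank(F)$ for the members of $\Gamma$ \emph{not} containing $a$; the nested-set forest structure lets you make those choices independently, but you should say so explicitly. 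The paper sidesteps all of this by citing \cite{AK}.
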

\begin{proof}
Already Ardila and Klivans \cite{AK} showed that the $\omega \in \R^n$ for which $\Mw$ is loopless are exactly the ones 
lying in the interior of polyhedral cones spanned by incidence vectors of flats. 
Due to Theorem \ref{theo1} the induced matroid types are the same for all elements of any face of the realizations of our complexes. 
\end{proof}

\begin{example}
\label{ex4}
Consider the matroid of Example \ref{ex2} and \ref{ex3}. 
In Figure \ref{fig:composition2} we compare the excerpts of the order complex $\Delta(\mathcal{L}-\{\hat{0},\hat{1} \})$, 
the nested set complex $\N$ and the Bergman complex $B(M)$ which arise as subdivisions of each other.  
The matroid type of the two-dimensional face is $\Mw~=~\{ 1235,1236,\\ 1245,1246 \}$. 
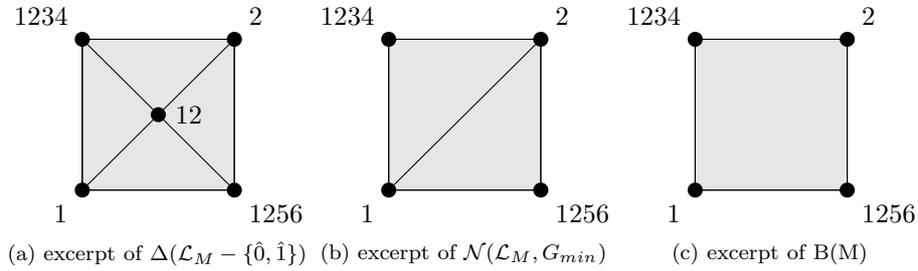
\begin{figure}[h]
  \centering
  \subfloat[excerpt of $\D$]{\begin{tikzpicture}[description/.style={fill=white,inner sep=2pt}]
\filldraw[fill=gray!20] (0,0) -- (0,2) -- (2,2) -- (2,0) -- (0,0);\
\node (1234) at (0,2)[knoten,label=above left:1234]{};
\node (2) at (2,2)[knoten,label=above right:2]{};
\node (1) at (0,0)[knoten,label=below left:1]{};
\node (12) at (1,1)[knoten,node distance=2.12cm,label=right:12]{};
\node (1256) at (2,0)[knoten,label=below right:1256]{};
\path[-] (1)  edge (1234);
\path[-] (1)  edge (1256);
\path[-] (2)  edge (1234);
\path[-] (2)  edge (1256);
\path[-] (1)  edge (12);
\path[-] (12)  edge (2);
\path[-] (12)  edge (1256);
\path[-] (12)  edge (1234);
\end{tikzpicture}}                
  \subfloat[excerpt of $\N$]{\begin{tikzpicture}[description/.style={fill=white,inner sep=2pt},node distance=3cm]
\filldraw[fill=gray!20] (0,0) -- (0,2) -- (2,2) -- (2,0) -- (0,0);\
\node (1234) at (0,2)[knoten,label=above left:1234]{};
\node (2) at (2,2)[knoten,label=above right:2]{};
\node (1) at (0,0)[knoten,label=below left:1]{};
\node (1256) at (2,0)[knoten,label=below right:1256]{};
\path[-] (1)  edge (1234);
\path[-] (1)  edge (1256);
\path[-] (2)  edge (1234);
\path[-] (2)  edge (1256);
\path[-] (1)  edge (2);
\end{tikzpicture}}
 \subfloat[excerpt of B(M)]{\begin{tikzpicture}[description/.style={fill=white,inner sep=2pt},node distance=3cm]
\filldraw[fill=gray!20] (0,0) -- (0,2) -- (2,2) -- (2,0) -- (0,0);\
\node (1234) at (0,2)[knoten,label=above left:1234]{};
\node (2) at (2,2)[knoten,label=above right:2]{};
\node (1) at (0,0)[knoten,label=below left:1]{};
\node (1256) at (2,0)[knoten,label=below right:1256]{};
\path[-] (1)  edge (1234);
\path[-] (1)  edge (1256);
\path[-] (2)  edge (1234);
\path[-] (2)  edge (1256);
\end{tikzpicture}}
\caption{The same excerpt of all polyhedral complexes}
\label{fig:composition2}
\end{figure}
\end{example}

\section{Decomposition of matroid types}
\label{sec5}
\label{decompositionchapter}

\begin{definition}
 Let $A$ be a subset of $E(M)$. We say that $A$ has \emph{full $\omega$-rank} if for all bases
$b \in M_{\omega} $, $ |A \cap b |= \rank(A)$ holds.   
\end{definition}
\noindent Note that the vertices of a face have full $\omega$-rank by Theorem \ref{theo1}. 
Here is an immediate application of this important definition. 
\begin{proposition}
\label{prop1}
A matroid type $\Mw$ is loopless iff all sets $A$ with full $\omega$-rank are flats of $M$. 
\end{proposition}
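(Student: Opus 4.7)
The plan is to prove both directions separately, using the rank characterisation of flats: $A$ is a flat iff for every $e \notin A$ one has $\rank(A \cup \{e\}) > \rank(A)$.

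For the forward direction, I will assume $M_\omega$ is loopless and let $A$ have full $\omega$-rank. Arguing by contradiction, suppose $A$ is not a flat, so there exists $e \in E(M) \setminus A$ with $\rank(A \cup \{e\}) = \rank(A)$. Since $M_\omega$ is loopless, pick a basis $b \in M_\omega$ containing $e$. Full $\omega$-rank gives $|b \cap A| = \rank(A)$, so $b \cap A$ is a maximum independent subset of $A$. Now consider the set $(b \cap A) \cup \{e\} \subseteq A \cup \{e\}$: it has cardinality $\rank(A)+1$ but sits inside a set of rank $\rank(A)$, hence it is dependent and therefore contains a circuit $C$. Since $b \cap A$ is independent, $e$ must lie in $C$, so $C \subseteq (b \cap A) \cup \{e\} \subseteq b$, contradicting the independence of $b$.

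For the converse I will prove the contrapositive: if $M_\omega$ is not loopless, I will exhibit a non-flat with full $\omega$-rank. Let $e$ be a loop of $M_\omega$ and set $A := E(M) \setminus \{e\}$. The element $e$ cannot be a coloop of $M$, because a coloop sits in every basis of $M$ and hence in every basis of $M_\omega$, contradicting the fact that $e$ is a loop of $M_\omega$. Thus some basis of $M$ avoids $e$, which forces $\rank(A) = \rank(M)$. On the other hand, every basis $b$ of $M_\omega$ misses $e$, so $b \subseteq A$ and $|b \cap A| = \rank(M) = \rank(A)$; hence $A$ has full $\omega$-rank. But $\rank(A \cup \{e\}) = \rank(M) = \rank(A)$, so $A$ is not a flat, giving the required counterexample.

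The proof is essentially elementary once the two directions are set up in this way. The only subtle point is the coloop argument in the backward direction, which is needed to guarantee $\rank(A) = \rank(M)$ before one can claim that $A$ has full $\omega$-rank; I expect that to be the main place where a careless write-up could slip, but it is immediate once one remembers that bases of $M_\omega$ are in particular bases of $M$.
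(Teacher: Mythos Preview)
Your proof is correct and follows essentially the same idea as the paper. Both directions agree in spirit: for the forward direction the paper also picks $x\notin A$ with $\rank(A\cup\{x\})=\rank(A)$ and observes, directly from $|b\cap A|=\rank(A)=\rank(A\cup\{x\})\geq |b\cap(A\cup\{x\})|$, that no $b\in M_\omega$ can contain $x$; your circuit-inside-a-basis contradiction is a mild reformulation of this inequality. For the converse the paper simply says ``the same argument works,'' whereas you spell it out explicitly via $A=E(M)\setminus\{e\}$ together with the coloop observation --- this is a perfectly valid (and arguably clearer) way to make that sentence precise.
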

\begin{proof}
 Assume $A$ is not a flat, then there exists a circuit $c$ of $M$ such that $c \setminus A = \{ x \}$. For all bases $b \in M_{\omega}$: 
\begin{displaymath} |A \cap b  | = \rank(A) = \rank(A \cup \{ x \}) \geq |(A \cup \{ x \}) \cap b  |. \end{displaymath}
So $b$ can not contain $x$. Since this holds for all bases of $M_{\omega}$, this means $x$ is a loop of $\Mw$. 
The same argument works for the other implication, too. 
\end{proof}
\newpage
\begin{proposition}
\label{prop2}
 Let $A$ and $B$ both having full $\omega$-rank. Then $A \cap B$, $A \cup B$ and 
every connected component of a set with full $\omega$-rank have full $\omega$-rank, too.
\end{proposition}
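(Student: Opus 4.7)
The plan rests on two elementary tools: matroid submodularity $\rank(A \cap B) + \rank(A \cup B) \leq \rank(A) + \rank(B)$ and the set-theoretic identity $|A \cap b| + |B \cap b| = |(A \cup B) \cap b| + |(A \cap B) \cap b|$, valid for any set $b$. Sandwiching these two against each other should force equality everywhere, which is precisely what ``full $\omega$-rank'' demands of $A \cup B$ and $A \cap B$.

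For the claims about $A \cup B$ and $A \cap B$, I would fix an arbitrary basis $b \in M_\omega$ and chain
\begin{align*}
\rank(A) + \rank(B) &= |A \cap b| + |B \cap b| \\
&= |(A \cup B) \cap b| + |(A \cap B) \cap b| \\
&\leq \rank(A \cup B) + \rank(A \cap B) \\
&\leq \rank(A) + \rank(B).
\end{align*}
The first equality uses full $\omega$-rank of $A$ and $B$; the second is inclusion-exclusion; the first inequality applies the general bound $|S \cap b| \leq \rank(S)$ term by term; the final inequality is submodularity. Equality throughout forces $|(A \cup B) \cap b| = \rank(A \cup B)$ and $|(A \cap B) \cap b| = \rank(A \cap B)$, and since $b$ was an arbitrary basis of $M_\omega$, both $A \cup B$ and $A \cap B$ have full $\omega$-rank.

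For the connected-components claim, let $C$ have full $\omega$-rank and let $C_1,\ldots,C_k$ be the connected components of the restriction of $M$ to $C$. The standard decomposition of a matroid as a direct sum of its connected components gives $\rank(C) = \sum_{i=1}^k \rank(C_i)$, while $|C \cap b| = \sum_{i=1}^k |C_i \cap b|$ holds trivially because the $C_i$ partition $C$. For any $b \in M_\omega$ the chain
\[ \sum_{i=1}^k \rank(C_i) = \rank(C) = |C \cap b| = \sum_{i=1}^k |C_i \cap b| \leq \sum_{i=1}^k \rank(C_i) \]
therefore forces $|C_i \cap b| = \rank(C_i)$ for every $i$, which is exactly the claim. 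The only mildly subtle point in the whole proposition is making sense of ``connected components of $C$'' when $C$ is not itself a flat; this is harmless since the restricted matroid is defined on any subset and its rank function agrees with $\rank$. Beyond that, the argument is pure bookkeeping with submodularity, inclusion-exclusion, and rank-additivity over connected components.
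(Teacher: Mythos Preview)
Your proof is correct and uses the same ingredients as the paper's own argument: inclusion--exclusion for $|{\cdot}\cap b|$, the bound $|S\cap b|\le\rank(S)$, submodularity of rank, and rank-additivity over connected components. The only difference is organizational: the paper first squeezes out $|(A\cap B)\cap b|=\rank(A\cap B)$ and then, in a second pass, uses that to get $|(A\cup B)\cap b|=\rank(A\cup B)$, whereas you handle both at once by observing that equality in the single chain $\rank(A)+\rank(B)=\cdots\le\rank(A\cup B)+\rank(A\cap B)\le\rank(A)+\rank(B)$ forces equality in each of the two termwise bounds simultaneously. This is a minor streamlining, not a different method.
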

\begin{proof}
For all bases $b \in \Mw$: 
\begin{align*}
	\rank(A \cup B) 
 \geq &	  |  ( A \cup B) \cap b | \\
=	 & | A \cap  b  |   +  |  B \cap  b |- |  (A \cap B)  \cap  b  |)\\
= 	 & \rank(A) + \rank(B) - |  ( A \cap B) \cap b |.
\end{align*}

\noindent With this inequality and the submodularity of the rank function in mind it follows:
\begin{align*}
\rank(A) + \rank(B) 
~ & \leq ~
 \rank(A \cup B) + |  ( A \cap B) \cap b | \\ 
~ & \leq ~ 
\rank(A \cup B) + \rank(A \cap B)
~ \leq ~
\rank(A) + \rank(B).
\end{align*}
From this we conclude that $|  ( A \cap B) \cap b |=\rank(A \cap B)$. So $A \cap B$ has full $\omega$-rank, too.

\bigskip
\noindent Now we want to show the same for $A \cup B$:
\begin{align*}
	\rank(A \cup B) \geq  &~ |  ( A \cup B) \cap b | \\
=	 &~ |  A  \cap  b  | - |  (A \cap B)  \cap  b  |  +  
|  B  \cap  b  |\\
= 	 & ~\rank(A) + \rank(B) - \rank(A \cap B).
\end{align*}
Again, with this inequality and the submodularity of the rank function in mind, we get
\begin{displaymath}\rank(A) + \rank(B) \leq
\rank(A \cup B) + \rank(A \cap B)
 \leq 
\rank(A) + \rank(B).\end{displaymath}

\noindent The same way we conclude that $| ( A \cup B) \cap b | = \rank(A \cup B$). So $A \cup B$ has full $\omega$-rank, too. 

\bigskip
\noindent Let $A_1,\ldots,A_t$ denote the connected components of a set $A$ with full $\omega$-rank. For $b \in M_{\omega}$:
\begin{displaymath} \sum_{i=1}^t |b \cap A_i| = |b \cap A|= \rank(A)=\sum_{i=1}^t \rank(A_i). \end{displaymath}
Together with the pairwise inequality $|b \cap A_i| \leq \rank(A_i)$ we can conclude that $|b~\cap~A_i|~=~\rank(A_i)$. 
So all the $A_i$ have full $\omega$-rank, too.  
\end{proof}
With this knowledge we see that the set of subsets with full $\omega$-rank is a sublattice of $\mathcal{L}_M$. 
It has the property that the join is already the union 
instead of just its span. 
Additionally it is closed under taking connected components \textit{i.e.} if an interval $[A,B]$ in $\mathcal{L}_M$ is isomorphic to 
$[A,C_1]\times [A,C_2]$ for $A<C_1,C_2<B$ and $A,B$ have $\omega$-full rank then both $C_1$ and $C_2$ have this property, too.  
All information about $\Mw$ is contained in this special sublattice. 
It is the sublattice, closed under taking connected components,  
which is induced by the set of flacets and the elements $\hat{0},\hat{1}$. 
This observation is the starting point for section \ref{ausblick}.  

\begin{theorem}
\label{maintheo1}
Let $\Mw$ be the matroid type of any face of either the order complex $\D$, the nested set complex $\N$ or the 
Bergman complex $B(M)$ and $\Gamma$ its set of vertices.  
Additionally let $\Pi:=\bigvee_{F\in\Gamma}(F~ |~ E(M)-F)$ be the partition of $E(M)$ which is the join in $\Pi_{E(M)}$ 
of the partitions consisting of just two blocks, the vertex and its complement. 
For any block $\alpha$ in $\Pi$, let $\Gamma_{\alpha}$ denote the elements of $\Gamma$ which contain $\alpha$ as a subset. 
Then, 
 \begin{displaymath} M_{\omega} \cong \bigoplus\limits_{\alpha \in \Pi} M\left[~\bigcap \Gamma_{\alpha} ~ 
\backslash ~ \alpha,\bigcap \Gamma_{\alpha}~\right].\end{displaymath}
\end{theorem}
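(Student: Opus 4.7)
The plan is to use Theorem \ref{theo1} and Proposition \ref{prop2} to promote the trivial full $\omega$-rank of each $F \in \Gamma$ to full $\omega$-rank of the two natural subsets attached to each block. Specifically, $F_\alpha := \bigcap \Gamma_\alpha$ has full $\omega$-rank by closure under intersection, and I would prove the identity
$$K_\alpha \;:=\; F_\alpha \setminus \alpha \;=\; \bigcup_{\substack{\beta \in \Pi,\,\beta \neq \alpha \\ \beta \subseteq F_\alpha}} F_\beta$$
which exhibits $K_\alpha$ as a union of full $\omega$-rank sets, hence full $\omega$-rank by a second application of Proposition \ref{prop2}. For the inclusion $\supseteq$, note that every $F \in \Gamma$ (and hence every $F_\alpha$) is a union of blocks of $\Pi$, so for a block $\beta \neq \alpha$ with $\beta \subseteq F_\alpha$ the inclusion $\Gamma_\alpha \subseteq \Gamma_\beta$ is strict, forcing $F_\beta \subseteq F_\alpha$ together with $F_\beta \cap \alpha = \emptyset$. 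The inclusion $\subseteq$ is trivial since each $\beta \subseteq F_\beta$.

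With both $F_\alpha$ and $K_\alpha$ saturated on $M_\omega$, every basis $b \in M_\omega$ has
$$|b \cap \alpha| \;=\; |b \cap F_\alpha| - |b \cap K_\alpha| \;=\; \rank(F_\alpha) - \rank(K_\alpha) \;=:\; r_\alpha,$$
a quantity independent of $b$. Consequently each block $\alpha$ is a separator of $M_\omega$, and iterating the splitting at every block yields the direct sum decomposition
$$M_\omega \;=\; \bigoplus_{\alpha \in \Pi} M_\omega|_\alpha.$$

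The last task is to identify each restriction $M_\omega|_\alpha$ with $M[K_\alpha, F_\alpha]$ on the common ground set $\alpha$. One inclusion of bases is immediate from the full $\omega$-rank conditions on $F_\alpha$ and $K_\alpha$. For the reverse, given a basis $b^*_\alpha$ of $M[K_\alpha, F_\alpha]$, realized as $b^* \cap \alpha$ for some $b^* \in M$ with $|b^* \cap F_\alpha| = \rank(F_\alpha)$ and $|b^* \cap K_\alpha| = \rank(K_\alpha)$, I would fix some $\bar{b} \in M_\omega$ and glue
$$b \;:=\; (b^* \cap F_\alpha) \cup (\bar{b} \setminus F_\alpha).$$
This is a basis of $M$ because $F_\alpha$ is full $\omega$-rank, and checking $|b \cap F| = \rank(F)$ for each $F \in \Gamma$ splits into the case $F \supseteq F_\alpha$ (the two halves contribute $\rank(F_\alpha)$ and $\rank(F) - \rank(F_\alpha)$, respectively) and the case $F \cap \alpha = \emptyset$ (in which case $F \cap F_\alpha \subseteq K_\alpha$ is itself full $\omega$-rank by Proposition \ref{prop2}).

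The main obstacle is this second case: the glued basis only satisfies $|b \cap F| = \rank(F)$ when the chosen $b^*$ saturates the rank of every intersection $F \cap F_\alpha$ inside $K_\alpha$, which is not automatic for an arbitrary representative. The cleanest way to secure this is either to choose $b^*$ so that $b^* \cap F_\alpha$ is itself a basis of the matroid type of $\omega$ inside the smaller matroid $M|_{F_\alpha}$ — reducing the identification to a strictly smaller instance handled by induction on the number of blocks — or to work at the polytope level, where the containment of $P_{M_\omega}$ in the product of block-simplices, combined with the fact that every edge of a matroid polytope is parallel to some $e_i - e_j$, forces $P_{M_\omega}$ to factor as a product of matroid polytopes whose vertices can then be matched with the bases of $M[K_\alpha, F_\alpha]$.
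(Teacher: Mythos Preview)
Your separator-based strategy is a genuinely different and pleasant organization of the argument: once you know that $|b\cap\alpha|$ is constant on $M_\omega$, the decomposition $M_\omega=\bigoplus_\alpha M_\omega|_\alpha$ is immediate, and the forward inclusion $M_\omega|_\alpha\subseteq M[K_\alpha,F_\alpha]$ follows at once. Your identity $K_\alpha=\bigcup_{\beta\neq\alpha,\,\beta\subseteq F_\alpha}F_\beta$ is correct and is exactly the paper's formula $\bigcap\Gamma_\alpha\cap\bigcup\Gamma_\alpha^C$ rewritten; either form shows $K_\alpha$ has full $\omega$-rank via Proposition~\ref{prop2}.

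The gap you flag is real, and neither of the fixes you sketch is quite carried out. The gluing $b=(b^*\cap F_\alpha)\cup(\bar b\setminus F_\alpha)$ fails precisely because you have no control over $|b^*\cap(F\cap F_\alpha)|$ for $F\in\Gamma_\alpha^C$. The paper avoids this pitfall by not isolating a single block at all: it takes an \emph{arbitrary} basis $b'$ of the full direct sum $\bigoplus_\alpha M[K_\alpha,F_\alpha]$ and shows directly that $b'\in M_\omega$. The key step you are missing is the paper's minimality argument. Partially order the blocks by $\alpha\leq\beta\iff\Gamma_\alpha\subseteq\Gamma_\beta$; then for a minimal block $\alpha$ at which $b'$ might fail to span, one has $b'\cap K_\alpha$ already spanning $M[\emptyset,K_\alpha]$ by minimality (since $K_\alpha$ is a union of strictly smaller blocks), and $b'\cap\alpha$ spans $M[K_\alpha,F_\alpha]$ by construction, contradicting the failure. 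Once $b'$ is known to be a basis of $M$, the verification $|b'\cap F|=\rank(F)$ for each $F\in\Gamma$ is a straight rank computation summing over the blocks $\alpha\subseteq F$. Your proposed induction ``on the number of blocks'' would, if made precise, essentially reproduce this minimality argument inside the restriction $M|_{F_\alpha}$; it is cleaner to run it once on the whole ground set as the paper does.
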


\begin{example}
\label{ex5}
 For the matroid $M$ of Example \ref{ex2} and the face of the Bergman complex with vertices $1,2,1234,1256$, the partition $\Pi$ is 
$1|2|34|56$. Its matroid type decomposition is 
\begin{displaymath}
 M[\emptyset,1]\oplus M[\emptyset,2]\oplus M[12,1234]\oplus M[12,1256].
\end{displaymath}
\end{example}

\begin{proof}
First of all notice that the ground set of the summand corresponding to $\alpha$ is exactly $\alpha$ 
so these ground sets form a disjoint union of the ground set $E(M)$. 

Notice that by construction of $\Pi$ a block $\alpha$ is either contained in or disjoint to a flacet $F$ of $\Gamma$. 
So, denoting the elements of $\Gamma$ disjoint to $\alpha$ as $\Gamma_{\alpha}^C$, 
the following equation exclusively follows from set theory: 
\begin{displaymath} \bigcap \Gamma_{\alpha} ~\cap~ \bigcup\limits \Gamma_{\alpha}^C = \bigcap \Gamma_{\alpha} - \alpha .\end{displaymath}

\noindent Due to Proposition \ref{prop2} the interval borders of the direct summands have full $\omega$-rank and 
hence they are flats by Proposition \ref{prop1}. 

\bigskip
For showing that the two matroids are equal, we will show that every basis of one of them is a basis of the other matroid, too.
\label{closelookmaintheo}

So for the first inclusion of bases let $b$ be any basis of the matroid $M_{\omega}$. The elements of $\Gamma$ have full $\omega$-rank 
by definition. By Proposition \ref{prop2} an intersection of subsets with full $\omega$-rank has full $\omega$-rank, too. 
This yields $|b \cap \bigcap \Gamma_{\alpha}|= 
\rank(\bigcap \Gamma_{\alpha})$. So $ b \cap \bigcap \Gamma_{\alpha}$ is a basis of $M[\emptyset,\bigcap \Gamma_{\alpha}]$.
By Proposition \ref{prop2} even $\bigcap \Gamma_{\alpha} ~\cap~ \bigcup \Gamma_{\alpha}^C$ has full $\omega$-rank. So 
$|b \cap (\bigcap \Gamma_{\alpha} ~\cap~ \bigcup \Gamma_{\alpha}^C)|= \rank(\bigcap \Gamma_{\alpha} ~\cap~ \bigcup \Gamma_{\alpha}^C)$ 
holds and 
$(b \cap \bigcap \Gamma_{\alpha} ~\cap~ \bigcup \Gamma_{\alpha}^C)$ is a basis 
of $M[\emptyset,\bigcap \Gamma_{\alpha} ~\cap~ \bigcup \Gamma_{\alpha}^C]$. 
Therefore $b \cap \alpha$ is a basis of the direct summand 
$M\left[~\bigcap \Gamma_{\alpha} ~\cap~ \bigcup \Gamma_{\alpha}^C,\bigcap \Gamma_{\alpha}~\right]$ and
$b = \bigcup_{\alpha \in \Pi} b \cap \alpha$ is a basis of 
$\bigoplus_{\alpha \in \Pi} M[~\bigcap \Gamma_{\alpha} ~\cap~ \bigcup \Gamma_{\alpha}^C,\bigcap \Gamma_{\alpha}~]$.

\bigskip
\noindent
For the second inclusion we have to show: Any basis 
$b'$ of $\bigoplus_{\alpha \in \Pi} M[\bigcap \Gamma_{\alpha} \backslash \alpha,\bigcap \Gamma_{\alpha}]$ 
is a basis of $M$, too. 
Afterwards we will show that the equation $|b' \cap F|=\rank(F)$ holds for all $F \in \Gamma$. 

\bigskip
\noindent Let $b$ be a basis of $M_{\omega}$, then  
\begin{align*}
\rank(M)=&|b|= \sum_{\alpha \in \Pi} |b\cap \alpha| \\
=& \sum_{\alpha \in \Pi} (|b \cap \bigcap \Gamma_{\alpha}| - |b \cap (\bigcap \Gamma_{\alpha} - \alpha)|) \\
=& \sum_{\alpha \in \Pi} (\rank(M[\emptyset,\bigcap \Gamma_{\alpha}]) - \rank(M[\emptyset,\bigcap \Gamma_{\alpha} -\alpha])) \\
=& \sum_{\alpha \in \Pi} \rank(M[\bigcap \Gamma_{\alpha} -\alpha ,\bigcap \Gamma_{\alpha}]) \\
=& \sum_{\alpha \in \Pi} |b' \cap \alpha| =|b'|.
\end{align*}
So we know the rank of $M[\bigcap \Gamma_{\alpha} \backslash \alpha,\bigcap \Gamma_{\alpha}]$ equals the rank of $M$ if there exists at least one 
basis $b$ fulfilling the conditions $|b \cap F|=\rank(F)$ for all $F\in \Gamma$. 
Now we want to prove that $b'$ has full rank in terms of the matroid $M$. Assume this is not the case. 
Then choose a minimal block $\alpha$ for which there exists $x\in \alpha$ such that by adding this to $b'$ we increase the rank 
in terms of the matroid $M$. 
Minimality is meant to be with respect to the order relation $\alpha \leq \beta$ iff $\bigcup \Gamma_{\alpha} \subseteq \bigcup \Gamma_{\beta}$.
\label{pagerefmaintheo}
\noindent Now, $\rank( \bigcap\Gamma_{\alpha} ) = \rank( \bigcap\Gamma_{\alpha} - \alpha ) +
 \rank( M[ \bigcap\Gamma_{\alpha} - \alpha, \bigcap\Gamma_{\alpha} ] $) holds. Since $b' \cap \bigcap\Gamma_{\alpha} - \alpha $ has full rank
in $M[\emptyset, \bigcap\Gamma_{\alpha} - \alpha $] by minimality and $b'\cap \bigcap\Gamma_{\alpha}$ has full rank in 
$M[ \bigcap\Gamma_{\alpha} - \alpha, \bigcap\Gamma_{\alpha} ]$ by construction of $b'$, we conclude that 
$b' \cap \bigcap \Gamma_{\alpha}$ has full rank in $M[ \emptyset , \bigcap \Gamma_{\alpha} ]$, too. 
But this is a contradiction to the choice of $\alpha$. 
So $b'$ has full rank in $M$ and it has the size $\rank(M)$. Therefore it is a basis of~$M$. 

\bigskip
\noindent Moreover, 
\begin{align*}
 \rank(F) &= | b \cap F|=| b \cap \bigcup\limits_{\alpha \subseteq F} \alpha| = \sum_{\alpha \subseteq F} |b \cap \alpha|\\
& = \sum_{\alpha \subseteq F} |b \cap (\bigcap \Gamma_{\alpha} -(\bigcap \Gamma_{\alpha} ~\cap~ \bigcup\limits \Gamma_{\alpha}^C))|\\
& = \sum_{\alpha \subseteq F} (|b \cap \bigcap \Gamma_{\alpha} | - | b \cap (\bigcap \Gamma_{\alpha} ~\cap~ \bigcup\limits \Gamma_{\alpha}^C)|)\\
& = \sum_{\alpha \subseteq F} (\rank(\bigcap \Gamma_{\alpha}) - \rank(\bigcap \Gamma_{\alpha} ~\cap~ \bigcup\limits \Gamma_{\alpha}^C))\\
& = \sum_{\alpha \subseteq F} \rank(M[~\bigcap \Gamma_{\alpha} ~\cap~ \bigcup\limits \Gamma_{\alpha}^C,\bigcap \Gamma_{\alpha}])\\
& = \sum_{\alpha \subseteq F} |b_{\alpha} \cap \alpha|  = \sum_{\alpha \subseteq F} |b' \cap \alpha| =|b' \cap F|.
\end{align*}
Since $|b' \cap F|=\rank(F)$ holds for all flacets $F$ of $\Gamma$, $b'$ is a basis of $M_{\omega}$, too. 
We finally have found a decomposition of $M_{\omega}$ in terms of its vertices. 
\end{proof}

\newpage
\begin{corollary}
Let $M$ be a connected matroid, $\Gamma=\{ F_1,\ldots,F_k  \}$ a set of flacets of $M$ and $\Pi:= \bigvee_{i}^k (F_i ~|~ E(M)-F_i) $. 
Then $\Gamma$ is the set of vertices of a face of $B(M)$ iff the following conditions are fulfilled: 
\begin{itemize}
\item \textnormal{For all $x$ in $E(M)$ if $x\in \alpha$ then there is at least one basis $b\in M$ s.t. \\
$|b \cap \bigcap \Gamma_{\alpha} |= \rank( \bigcap \Gamma_{\alpha})$, 
$|b \cap \bigcap \Gamma_{\alpha} \backslash \alpha|= \rank( \bigcap \Gamma_{\alpha} \backslash \alpha)$ and 
$x\in b$.} 
\item \textnormal{There is a basis $b\in M$ such that $|b \cap F |= \rank( F )$ for all $F \in \Gamma$ }
\item \textnormal{there is no other flacet $F'$ of $M$ with the property that $|b \cap F'|=\rank(F')$ for all bases $b \in M$, which fulfill 
$|b \cap F|=\rank(F)$ for all flacets $F$ of $\Gamma$.}
\end{itemize}
In this case  $\bigoplus_{\alpha \in \Pi} M[\bigcap \Gamma_{\alpha} \backslash \alpha,\bigcap \Gamma_{\alpha}]$ is 
the matroid type of the face and $\Gamma$ its set of flacets. 
\end{corollary}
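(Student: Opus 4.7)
The plan is to prove the two implications separately, invoking Theorem \ref{theo1} for the explicit form of $M_\omega$ and Theorem \ref{maintheo1} for the direct sum decomposition. The key point is that for $\omega := \sum_{F \in \Gamma} e_F$, every $F \in \Gamma$ automatically has full $\omega$-rank by Theorem \ref{theo1}, so the hypotheses for the decomposition are essentially built into the construction of $\omega$.

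For the $(\Rightarrow)$ direction, take $\omega$ in the relative interior of the face with vertex set $\Gamma$. Since the face lies in $B(M)$, the matroid type $M_\omega$ is loopless and non-empty, immediately giving condition 2. For condition 1, fix $x \in \alpha$ and choose a basis $b \in M_\omega$ containing $x$; by Proposition \ref{prop2} the sets $\bigcap \Gamma_\alpha$ and $\bigcap \Gamma_\alpha \setminus \alpha$ have full $\omega$-rank, so $b$ automatically satisfies the required intersection conditions. For condition 3, a flacet $F'$ satisfying $|b \cap F'| = \rank(F')$ for every $b \in M_\omega$ has full $\omega$-rank, which is equivalent to $F'$ being a vertex of the face, and hence already an element of $\Gamma$.

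For the $(\Leftarrow)$ direction, set $\omega := \sum_{F \in \Gamma} e_F$ and use Theorem \ref{theo1} to describe $M_\omega$. Condition 2 makes $M_\omega$ non-empty, and combined with the automatic full $\omega$-rank of the elements of $\Gamma$, the basis-equality argument of Theorem \ref{maintheo1} applies and yields $M_\omega \cong \bigoplus_{\alpha \in \Pi} M[\bigcap \Gamma_\alpha \setminus \alpha, \bigcap \Gamma_\alpha]$. Condition 1 then says each $\alpha$-summand is loopless: its witness basis $b$ restricts to a basis $b \cap \alpha$ of the summand containing $x$. Consequently $M_\omega$ itself is loopless, $\omega$ lies in a cone of the Bergman fan, and the corresponding face of $B(M)$ has the stated direct sum as its matroid type. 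Condition 3 finally rules out any additional flacets with full $\omega$-rank, so the vertex set of this face is exactly $\Gamma$.

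The main obstacle is to verify that Theorem \ref{maintheo1} can be invoked in the $(\Leftarrow)$ direction without circularly assuming looplessness. In particular one must confirm that the passing remark in its proof that ``the interval borders are flats by Proposition \ref{prop1}'' is merely descriptive and not load-bearing, so that the basis comparison is valid as soon as we have non-emptiness of $M_\omega$ and full $\omega$-rank of the elements of $\Gamma$. Once this is settled, conditions 1 and 3 drop in cleanly to supply looplessness and vertex-set maximality respectively.
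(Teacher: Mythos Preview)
Your proposal is correct and follows the same approach as the paper: both invoke Theorem~\ref{theo1} to describe $M_\omega$ and then appeal to the basis-comparison argument inside the proof of Theorem~\ref{maintheo1} (rather than to its statement), exactly so that looplessness need not be assumed in advance. Your write-up is in fact more complete than the paper's: you treat the $(\Rightarrow)$ direction explicitly, you correctly attribute looplessness to condition~1 (the paper's sentence ``the second condition makes sure that the matroid type is indeed loopless'' appears to be a slip), and you flag the circularity issue that the paper resolves only implicitly by citing ``the part in the proof of theorem~\ref{maintheo1}'' rather than the theorem itself.
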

\begin{proof}
This follows from Theorem \ref{theo1} and the part in the proof of theorem \ref{maintheo1} where it is shown that 
$\bigoplus_{\alpha \in \Pi} M[\bigcap \Gamma_{\alpha}\backslash \alpha,\bigcap \Gamma_{\alpha}~] ~\subseteq~ M$ if 
there is at least one basis 
satisfying $|b \cap F|=\rank(F)$ for all flacets $F$ of $\Gamma$. 
The second condition makes sure that the matroid type is indeed loopless. 
It may happen that the set $\Gamma$ determines a non-simplicial face of $B(M)$, for which there 
are still flacets which do not change the matroid type when adding their condition. The third point takes care of this. 
\end{proof}
\noindent For an example of the latter effect see \ref{ex5}. 
\bigskip
\begin{theorem}
\label{maintheo2}
Let $M$ be a connected matroid. 
For a face of its Bergman complex the decomposition of the matroid type in theorem \ref{maintheo1} is the finest one can get \textit{i.e.} 
the direct summands are connected. 
\end{theorem}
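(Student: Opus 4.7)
The plan is a short dimension count identifying $|\Pi|$ with $c(\Mw)$, the number of connected components of $\Mw$. The direct-sum decomposition $\Mw = \bigoplus_{\alpha \in \Pi} N_\alpha$ of Theorem \ref{maintheo1} immediately yields $c(\Mw) = \sum_{\alpha \in \Pi} c(N_\alpha) \geq |\Pi|$, with equality precisely when each summand $N_\alpha$ is connected. The theorem therefore reduces to the reverse inequality $c(\Mw) \leq |\Pi|$.

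I would establish this via the normal cone of $P_M$ at its face $P_\Mw$. By the dimension formula for matroid polytopes from Section \ref{sec2} applied to $\Mw$, $\dim P_\Mw = n - c(\Mw)$, so this normal cone has dimension $c(\Mw)$ in $\R^n$. Since $\Mw$ is loopless (being a face of $B(M)$), every element of $E(M)$ lies in some basis of $\Mw$, so no coordinate facet $\{x_i = 0\}$ of $P_M$ is tight on $P_\Mw$. By the facet description of $P_M$ recalled in Section \ref{sec2}, the facets of $P_M$ containing $P_\Mw$ are therefore exactly the $P_{M_{e_F}}$ for $F \in \Gamma$. Consequently the normal cone equals the sum of the lineality $\R(1,\ldots,1)$ and the positive cone of $\{e_F : F \in \Gamma\}$, and its linear span has dimension $c(\Mw)$.

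The decisive step is then a rank count. Form the $(|\Gamma|+1) \times n$ matrix $A$ whose rows are the incidence vectors $e_F$ for $F \in \Gamma$ together with $(1,\ldots,1)$; by the preceding paragraph, $\rank(A) = c(\Mw)$. Two columns of $A$ indexed by $i,j \in E(M)$ coincide if and only if $i$ and $j$ have identical $\Gamma$-signatures in $\{0,1\}^\Gamma$, which by the very definition of $\Pi$ happens if and only if $i$ and $j$ lie in the same block. Thus $A$ has at most $|\Pi|$ distinct columns, so its column rank, and hence $c(\Mw)$, is at most $|\Pi|$. Combining the two inequalities yields $c(\Mw) = |\Pi|$, forcing every summand $N_\alpha$ to be connected.

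The main obstacle is correctly describing the normal cone of $P_M$ at $P_\Mw$: this rests on the looplessness of $\Mw$ to rule out the coordinate facets, and on the flacet--facet correspondence from Section \ref{sec2} to identify the remaining ones. Once that description is in place, the inequality $\rank(A) \leq |\Pi|$ is immediate linear algebra.
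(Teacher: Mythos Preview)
Your argument is correct, and it is genuinely different from the paper's proof. The paper proceeds constructively: it develops Lemma~\ref{lemma1} and Propositions~\ref{prop3}--\ref{prop5} in order to show, for each block $\alpha$, first that $M[\emptyset,\bigcap\Gamma_\alpha]$ is connected, and then that any connected component of $M[\bigcap\Gamma_\alpha\setminus\alpha,\bigcap\Gamma_\alpha]$ must already equal $\alpha$. By contrast, you bypass all of this with a dimension count: the normal cone of $P_M$ at $P_{\Mw}$ has dimension $c(\Mw)$, its linear span is the row space of your matrix $A$, and the columns of $A$ are constant on blocks of $\Pi$, giving $c(\Mw)\le |\Pi|$ and hence equality. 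Your route is considerably shorter and uses only the polytope dimension formula and the flacet description of facets already established in Section~\ref{sec2}; the paper's route, while longer, yields structural information of independent interest (notably Proposition~\ref{prop5}, that every connected flat with full $\omega$-rank is an intersection of flacets in $\Gamma$), which feeds into the discussion in Section~\ref{sec6}. The one point worth stating a bit more carefully in your write-up is why the linear span of the normal cone has dimension exactly $c(\Mw)$: this follows because the normal cone of a face $F$ of a polytope in $\R^n$ has lineality space $(\mathrm{aff}\,P_M)^\perp=\R(1,\dots,1)$ and dimension $n-\dim F$, so its linear span also has that dimension.
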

\noindent In order to show this we will prove some propositions and a lemma tailored just for this proof. 
\begin{lemma}
\label{lemma1}
 Let $A$ be a connected flat. We denote the connected components of $M[A,E(M)]$ by $K_1,\ldots,K_m$. Let $c$ be a circuit of $M$ and 
$x \in c \cap K_i $ for a certain $K_i$. Then there exists a circuit $c'$ of $M$ such that 
$c'- A \subseteq c - A$, $x \in c' \cap K_i  $ and $c'-A$ is a circuit of $M[A,E(M)]$.    
\end{lemma}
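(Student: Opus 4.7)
The approach is to pass to the contraction matroid $M/A := M[A,E(M)]$, exhibit $x$ as an element of the closure in $M/A$ of $(c-A)\setminus\{x\}$, extract a circuit $c^{*}$ of $M/A$ through $x$ contained in $c-A$, and then lift $c^{*}$ to a circuit $c'$ of $M$ via the standard description of circuits of a contraction. The role of the component $K_i$ will come in essentially for free at the end, since any circuit of a matroid is contained in a single connected component.

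The key computation is the verification that $x\in \operatorname{cl}_{M/A}((c-A)\setminus\{x\})$. Using the contraction rank formula $\rank_{M/A}(X)=\rank_M(X\cup A)-\rank_M(A)$ for $X\subseteq E(M)-A$, and noting that $x\notin A$ implies $((c-A)\setminus\{x\})\cup A=(c\cup A)\setminus\{x\}$, the desired equality $\rank_{M/A}((c-A)\setminus\{x\})=\rank_{M/A}(c-A)$ reduces to $\rank_M((c\cup A)\setminus\{x\})=\rank_M(c\cup A)$. Since $c$ is a circuit of $M$, we have $x\in\operatorname{cl}_M(c\setminus\{x\})$, and applying monotonicity of closure to the inclusion $c\setminus\{x\}\subseteq (c\cup A)\setminus\{x\}$ gives $x\in\operatorname{cl}_M((c\cup A)\setminus\{x\})$, which is exactly the rank equality we need.

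Once $x$ is known to lie in the closure of $(c-A)\setminus\{x\}$ in $M/A$, the standard fact that any element of the closure of a set lies on a circuit contained in that set (proved by extending to a basis of the closure and taking a fundamental circuit) produces a circuit $c^{*}$ of $M/A$ with $x\in c^{*}\subseteq c-A$; since $c^{*}$ is contained in a single connected component of $M/A$ and $x\in K_i$, this forces $c^{*}\subseteq K_i$. By the standard description of circuits of a contraction, $c^{*}$ has the form $c'-A$ for some circuit $c'$ of $M$, and this $c'$ satisfies all three conditions of the lemma. The only conceptual obstacle is recognizing upfront that the argument wants to be phrased in terms of closure in the contraction $M/A$ rather than via circuit elimination in $M$; once the rank formula for contractions is invoked, the computation is short and the conclusion follows from well-known matroid folklore.
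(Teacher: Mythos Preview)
Your proof is correct and takes a genuinely different route from the paper's. The paper argues by iterated strong circuit elimination inside $M$: starting from $c_0=c$, as long as $c_i-A$ fails to be a circuit of $M[A,E(M)]$ one finds a circuit $\tilde c_i$ of $M$ with $\emptyset\neq\tilde c_i-A\subsetneq c_i-A$, picks some $y_i\in\tilde c_i-A$ (when $x\notin\tilde c_i$), and applies strong elimination to produce a circuit $c_{i+1}\ni x$ with $c_{i+1}-A\subseteq(c_i-A)\setminus\{y_i\}$; the process terminates by finiteness. You instead work directly in the contraction: the rank formula for $M/A$ gives $x\in\operatorname{cl}_{M/A}\bigl((c-A)\setminus\{x\}\bigr)$, so a single application of the closure--circuit correspondence yields a circuit $c^{*}$ of $M/A$ with $x\in c^{*}\subseteq c-A$, which you then lift to a circuit $c'$ of $M$ via the standard fact that every circuit of $M/A$ has the form $C-A$ for some circuit $C$ of $M$. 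Your argument is shorter and more conceptual, bypassing the iteration entirely; the paper's argument is more self-contained in that it stays inside $M$ throughout and uses only the strong elimination axiom, never invoking the contraction rank formula or the description of circuits of a contraction.
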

\begin{proof}
The idea of constructing such a circuit $c'$ from $c$ is the following. The problem with $c$ is that it can cut other connected 
components, too. 
Even $c_0 \cap K_i$ can be the disjoint union of several circuits of $M[A,E(M)]$. So we will use the strong elimination 
axiom \cite{Ox11} for matroids for cutting out parts of the circuit until we get a circuit of $M[A,E(M)]$.

 At first, we set $c_0:=c$. If $c_0$ had the property that $c_0 - A$ is a circuit of $M[A,E(M)]$, we would be done. So assume this is not 
the case. 
Then there exists another circuit $\tilde{c}_0$ of $M$ such that $ \tilde{c_0} - A \subsetneq c_0 -A $. 
Note that in this case $c_0 \cap A \neq \emptyset$ is a necessary condition. \\
Therefore $\tilde{c_0} \cap A \neq \emptyset$, since otherwise $\tilde{c_0} = \tilde{c_0} - A \subsetneq c_0 -A \subsetneq c_0$, 
which is a contradiction to $c_0$ being a circuit. \\
If $x \in \tilde{c_0}$, again, we would be done. Otherwise, we choose any $y_0 \in \tilde{c_0} -A$. 
The strong elimination axiom for matroids grants us the existence of a new circuit $c_1$ with the properties that 
$c_1 \subseteq (c_0 \cup \tilde{c_0}) - \{y_0\}$ and $x \in c_1$. \\
Let us compare $c_0$ and $c_1$. Like $c_0$ our new circuit contains $x$. Constructing $c_2, c_3, \ldots $ , we can repeat this process, 
as long as 
we have to. 
\begin{displaymath} c_{i+1} - A \subseteq (\tilde{c_i} \cup c_i) - \{ y_i \} - A = ((\tilde{c_i}-A) \cup (c_i -A)) - \{ y_i \}  =  c_i -A - \{y_i\} 
\end{displaymath}
Since $y_i \in \tilde{c_i} -A \subsetneq c_i - A$ , we remove at least one element by this process. 
But there are only finitely many elements we can remove, so the construction has to end with either $c_i -A$ or $\tilde{c_i} -A$ satisfying 
the condition of being a circuit in $M[A,E(M)]$ for some $i$.
\end{proof}
\begin{proposition}
\label{prop3}
Let $A$ be a connected flat of $M$ and $K_1,\ldots,K_t$ connected components of $M[A,E(M)]$. 
Then $F_i := A \cup \bigcup_{j \neq i} K_j = E(M)-K_i$ is a flacet of~$M$. 
\end{proposition}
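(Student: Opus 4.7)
My plan is to verify the flacet characterization (a flat is a flacet iff it is both connected and co-connected): $F_i$ is a flat of $M$ and both $M[\emptyset, F_i]$ and $M[F_i, E(M)]$ are connected.

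The first and third properties I would handle together from the isomorphism between the interval $[A, E(M)]$ in $\mathcal{L}_M$ and $\mathcal{L}_{M[A, E(M)]}$, combined with the direct sum decomposition $M[A, E(M)] = M_1 \oplus \cdots \oplus M_t$ into connected components on ground sets $K_1, \ldots, K_t$. In the induced product lattice $\prod_j \mathcal{L}_{M_j}$, the subset $\bigcup_{j \neq i} K_j$ is the element with $K_{j'}$ in every coordinate $j' \neq i$ and $\emptyset$ in the $i$-th coordinate, hence a flat; pulled back, $F_i$ is a flat of $M$. The interval $[F_i, E(M)]$ corresponds under the same isomorphism to $\mathcal{L}_{M_i}$, yielding $M[F_i, E(M)] \cong M_i$, which is connected by construction.

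The substantive condition is that $M[\emptyset, F_i]$ is connected. I would show that any two $u, v \in F_i$ lie in a common circuit of $M$ contained in $F_i$. The cases $u, v \in A$, and $u, v \in K_j$ for a single $j \neq i$, are routine: the first uses connectedness of $M[\emptyset, A]$; the second lifts a circuit of $M[A, E(M)]$ in $K_j$ through $u, v$ to a circuit of $M$ in $A \cup K_j \subseteq F_i$. For the mixed case $u \in A$, $v \in K_j$ with $j \neq i$, connectedness of $M$ supplies a circuit $C$ of $M$ through $u$ and $v$. If $C \subseteq F_i$ we are done; otherwise I purge its $K_i$-elements iteratively. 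For $z \in C \cap K_i$, Lemma \ref{lemma1} furnishes a circuit $c' \subseteq A \cup K_i$ with $z \in c'$ and $c' - A \subseteq C - A$, and strong circuit elimination applied to $(C, c')$ at $z$ produces a new circuit with strictly smaller $K_i$-intersection in which any prescribed element of $C \setminus c'$ can be retained; choosing that element to be $v$ (which lies in $C \setminus c'$ because $v \in K_j$, $c' \subseteq A \cup K_i$, and $j \neq i$) preserves $v$ throughout. After finitely many steps the resulting circuit lies in $F_i$ and still contains $v$.

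The main obstacle will be to guarantee that the surviving circuit also meets $A$. If it did not, then, $A$ being a flat, it would be a circuit of $M$ disjoint from $A$, hence also a circuit of $M[A, E(M)]$, and so confined to a single connected component; containing $v \in K_j$, that component would be $K_j$, leaving a circuit internal to $K_j$ with no link to $A$. My plan is to address this by preferring, at each elimination step, a $c'$ that already meets $A$, so that the elimination does not strip the $A$-entries of $C$ in a single move; and, when that fails, to argue by contradiction from $M$ being connected — if no circuit in $F_i$ ever links a particular $K_j$ to $A$, then the resulting non-trivial direct-sum decomposition of $M[\emptyset, F_i]$ extends, via rank additivity and the flat correspondence already set up, to a direct-sum decomposition of $M$ itself, which is impossible. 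This bookkeeping is the delicate step; once it is in place, every element of each $K_j$ ($j \neq i$) lies in the same connected component of $M[\emptyset, F_i]$ as $A$, so $M[\emptyset, F_i]$ is connected and $F_i$ is a flacet.
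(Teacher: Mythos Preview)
Your treatment of flatness and co-connectedness via the lattice isomorphism $[A,E(M)]\cong \prod_j \mathcal{L}_{M_j}$ is essentially the paper's argument and is fine.

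For connectedness of $M[\emptyset,F_i]$ you take a different and considerably harder route than the paper, and the step you yourself flag as ``the main obstacle'' is left as a plan rather than a proof. The paper avoids the entire purging-and-bookkeeping manoeuvre by applying Lemma~\ref{lemma1} in the \emph{opposite direction}: instead of invoking it on elements $z\in K_i$ to strip $K_i$ out, apply it once to $x\in K_j$ (with $j\neq i$). Start from a circuit $c$ of $M$ through $x$ and some $y\in A$ (available since $M$ is connected); the lemma returns a circuit $c'$ of $M$ with $x\in c'$, $c'\setminus A\subseteq c\setminus A$, and $c'\setminus A$ a circuit of $M[A,E(M)]$. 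Because a circuit of the contraction lies in a single component and contains $x\in K_j$, we get $c'\setminus A\subseteq K_j$, hence $c'\subseteq A\cup K_j\subseteq F_i$ in one stroke---no iteration, no elimination. The paper records $c'\cap A\neq\emptyset$, so $x$ is connected within $F_i$ to an element of $A$; transitivity through the connected set $A$ then finishes.

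This also shows that your three-case split is more than necessary: it suffices to connect each $x\in F_i\setminus A$ to some element of $A$ inside $F_i$, which the single lemma application above already does. Your iterative elimination of $K_i$-elements, and the rank-additivity contradiction you sketch to force the surviving circuit to meet $A$, can be made to work (if $K_j$ were a separator of $M[\emptyset,F_i]$ one checks $r(K_j)+r(E\setminus K_j)=r(E)$, contradicting connectedness of $M$), but the paper's one-shot use of Lemma~\ref{lemma1} renders all of it unnecessary.
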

\begin{proof}
 We want to show the following properties:
\begin{enumerate}
\setstretch{0.1}
\item $F$ is connected \\
\item $F$ is co-connected \\
\item $F$ is a flat 
\end{enumerate}
For the first condition, we start with a circuit $c$ which connects a given element $x \in F_i - A $ with any element $y \in A$. Its existence is 
guaranteed by the connectedness of $M$. Our previous lemma grants us a circuits $c' $ with $c' \subseteq F_i$, $c' \cap A \neq \emptyset$ and  
$x \in c'$. So every $x \in F_i -A$ is connected to an element of $A$ by a circuit which is disjoint to $K_i$. 
Since $A$ is connected itself transitivity of connectedness yields that $M[\emptyset,F_i]$ is connected, too.

\bigskip
\noindent For the second part, 
\begin{align*}
 M[F_i,E(M)] & =  M[A \cup \bigcup\limits_{j \neq i} K_j , E(M) ]\\
& \cong \left( \bigoplus\limits_{i=1}^t M[A,A \cup K_j]\right) ~[~\bigcup\limits_{j \neq i} K_j,
\underbrace{\bigcup\limits K_j}_{E(M)-A}] \\
& \cong 
M[A,A\cup K_i]  .
\end{align*}
So $F_i$ is co-connected since $K_i$ was a connected component. 

Now the third part is less work. Since $M[F_i,E(M)]$ is even connected, this matroid is loopless in particular. Thus $F_i$ is a flat.
\end{proof}
\begin{proposition}
\label{prop4}
Let $A \subseteq E(M)$ and $A_1,\ldots,A_n$ connected components of $M[\emptyset,A]$. 
We denote the connected components of $M[A,E(M)]$ by $K_1,\ldots,K_t$. If $A$ has full $\omega$-rank, then for any $K_i$ and any connected component 
$G$ of $M[\emptyset,A~\cup~K_i]$, which intersects $K_i$, both $A \cup K_i$ and $G$ have full $\omega$-rank, too.
\end{proposition}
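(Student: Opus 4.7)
The plan is to establish the full $\omega$-rank of $A \cup K_i$ first, and then derive the conclusion for $G$ as an immediate consequence of Proposition~\ref{prop2}.

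For the main step, I would fix an arbitrary basis $b$ of $M_\omega$ and compute $|b \cap (A \cup K_i)|$. Full $\omega$-rank of $A$ yields $|b \cap A| = \rank(A)$, so $b \cap A$ is a basis of the restriction $M[\emptyset, A]$ and consequently $b \setminus A$ is a basis of the contraction $M[A, E(M)]$. Since $K_1,\ldots,K_t$ are the connected components of $M[A,E(M)]$, that matroid decomposes as a direct sum whose $j$-th summand has ground set $K_j$, and each of its bases meets $K_j$ in a basis of the corresponding summand. Taking $j = i$ and using that the $i$-th summand coincides with $M[A, A \cup K_i]$, we obtain $|b \cap K_i| = \rank(A \cup K_i) - \rank(A)$, and adding $|b \cap A| = \rank(A)$ to this gives $|b \cap (A \cup K_i)| = \rank(A \cup K_i)$. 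Since this holds for every basis of $M_\omega$, the set $A \cup K_i$ has full $\omega$-rank.

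For the second part, once $A \cup K_i$ is known to have full $\omega$-rank, the last clause of Proposition~\ref{prop2} applies directly: every connected component of the restriction $M[\emptyset, A \cup K_i]$ has full $\omega$-rank, and in particular so does $G$. The hypothesis that $G$ meets $K_i$ is not really needed for the argument; it merely singles out the new content of the statement, since components lying entirely inside $A$ already have their full $\omega$-rank guaranteed by applying Proposition~\ref{prop2} to $A$ itself.

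The step I expect to be most delicate is the matroid-theoretic bookkeeping in the first paragraph, namely identifying the $i$-th summand of the contraction $M[A, E(M)]$ with $M[A, A \cup K_i]$ and translating this into the rank identity $\rank(A \cup K_i) = \rank(A) + \rank(M[A, A \cup K_i])$. Once this standard identification is in place, the rest of the argument is a short calculation, and the statement about $G$ follows immediately from Proposition~\ref{prop2}.
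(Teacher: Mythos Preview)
Your proof is correct and follows essentially the same approach as the paper: both arguments use that $b\setminus A$ is a basis of the contraction $M[A,E(M)]$, exploit the direct-sum decomposition over the $K_j$ to obtain $|b\cap K_i|=\rank(A\cup K_i)-\rank(A)$, and then invoke Proposition~\ref{prop2} for the conclusion about $G$. The only cosmetic difference is that the paper first records the inequality $|b\cap K_i|\leq \rank(A\cup K_i)-\rank(A)$ and then sums over all $i$ to force equality, whereas you go directly via the direct-sum property of bases; your route is slightly shorter but not genuinely different.
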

\begin{proof}
Our goal is to show that $|b\cap (G)|= \rank(G)$ holds for all bases $b$ of $M_{\omega}$. 
For any such basis the equation $ | A \cap b| = \rank(A)$ holds because $A$ has full $\omega$-rank. 
By Proposition \ref{prop2} all the $A_i$ and any union of them have full $\omega$-rank, too.     
Hence, \begin{displaymath} |b \cap K_i| + |b \cap A| = |b \cap (K_i \cup A)| \leq \rank(K_i \cup A),\end{displaymath}
which implies \begin{displaymath} |b \cap K_i| \leq \rank(K_i \cup A) - |b \cap A| =  \rank(K_i \cup A) - \rank(A).\end{displaymath}
On the other hand $b-A$ is a basis of $M[A,E(M)]$ and thus: 
\begin{align*}
 \sum_{i=1}^t |b \cap K_i| & = | b \cap \bigcup\limits_{i=1}^t K_i| = | b -A|= 
\rank(M[A,E(M)])\\
 & = \rank\left(\bigoplus\limits_{i=1}^t M[A,A \cup K_i]\right)
  = \sum_{i=1}^t \rank(M[A,A \cup K_i]) \\ 
 & = \sum_{i=1}^t (\rank(M[\emptyset,A \cup K_i])-\rank (M[\emptyset, A]))\\
 & = \sum_{i=1}^t (\rank(A \cup K_i)-\rank(A)).
\end{align*}
In a nutshell, $|b \cap K_i| \leq \rank(K_i \cup A) - \rank(A)$ and summing over all $K_i$ we obtain equality, so: 
\begin{align*}
&& |b \cap K_i| &= \rank(K_i \cup A) - \rank(A)\\
 \Rightarrow  
&& |b \cap K_i| +\rank(A) &= \rank(K_i \cup A) \\
 \Rightarrow 
&& |b \cap K_i|+ |b \cap A|  &= \rank(K_i \cup A) \\
 \Rightarrow 
&& |b \cap (K_i \cup A)| &= \rank(K_i \cup A).
\end{align*}

\noindent Again, by Proposition \ref{prop2}, all the connected components of 
$K_i \cup A$, such as our $G$, have full $\omega$-rank, too. 
\end{proof}
\begin{proposition}
\label{prop5}
 Every connected flat $A$ of a matroid $M$ is an intersection of flacets of $M$. Furthermore, if $A$ has full $\omega$-rank, 
then so do these flacets. This means they are vertices of the face $M_{\omega}$ of the Bergman complex. 
\end{proposition}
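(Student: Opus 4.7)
The plan is to invoke Proposition~\ref{prop3} directly. Let $K_1,\ldots,K_t$ be the connected components of the contraction $M[A,E(M)]$, and set
\begin{displaymath}
F_i \; := \; E(M)-K_i \; = \; A \cup \bigcup_{j\neq i} K_j .
\end{displaymath}
Proposition~\ref{prop3} tells us that each $F_i$ is a flacet of $M$. Since the $K_i$ partition $E(M)-A$, their complements intersect in
\begin{displaymath}
\bigcap_{i=1}^{t} F_i \; = \; E(M)-\bigcup_{i=1}^{t} K_i \; = \; A ,
\end{displaymath}
which exhibits $A$ as an intersection of flacets. The degenerate case $t=1$ simply says that $A$ is itself a flacet, and $t=0$ only occurs for $A=E(M)$.

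For the ``furthermore'' part I would chain Propositions~\ref{prop2} and~\ref{prop4}. Assuming $A$ has full $\omega$-rank, Proposition~\ref{prop4} yields that each $A\cup K_i$ has full $\omega$-rank. Writing $F_i=\bigcup_{j\neq i}(A\cup K_j)$ and invoking Proposition~\ref{prop2}, which says that unions of full-$\omega$-rank sets again have full $\omega$-rank, I conclude that every $F_i$ has full $\omega$-rank.

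To translate this into the final claim that each $F_i$ is a vertex of the face $\Mw$ of $B(M)$, I would invoke the characterization of vertices of the Bergman complex given just before Example~\ref{ex2}: vertices of the face corresponding to $\Mw$ are the flacets $F$ whose defining facet of $P_M$ contains the face representing $\Mw$, which by Theorem~\ref{theo1} amounts to $|b\cap F|=\rank(F)$ for every basis $b\in\Mw$, i.e.\ to $F$ having full $\omega$-rank. So the $F_i$ are vertices of $\Mw$. No significant obstacle is anticipated; the argument is a clean composition of Propositions~\ref{prop2},~\ref{prop3} and~\ref{prop4}, and the only step requiring any thought is checking that the hypotheses of Proposition~\ref{prop4} are met, which is automatic since the $K_i$ are by construction the connected components of $M[A,E(M)]$ and $A$ has full $\omega$-rank by assumption.
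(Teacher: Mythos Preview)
Your proposal is correct and follows essentially the same route as the paper: define $F_i=A\cup\bigcup_{j\neq i}K_j$, use Proposition~\ref{prop3} to see each $F_i$ is a flacet with $\bigcap_i F_i=A$, then combine Proposition~\ref{prop4} (each $A\cup K_i$ has full $\omega$-rank) with Proposition~\ref{prop2} (closure under unions) to get that $F_i=\bigcup_{j\neq i}(A\cup K_j)$ has full $\omega$-rank. Your added remarks on the degenerate cases and on why full-$\omega$-rank flacets are vertices of the face are fine elaborations, but nothing in the argument differs from the paper's own proof.
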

\begin{proof}
 Let $A$ be a connected flat of $M$ and $K_1,\ldots,K_t$ the connected components of $M[A,E(M)]$. Due to Proposition \ref{prop3} the sets 
$A \cup \bigcup_{j \neq i} K_j$ are flacets for all~$i$, and
\begin{displaymath}A=\bigcap_{i=1}^t ~\left(A \cup \bigcup\limits_{j \neq i} K_j\right) .\end{displaymath}
For the second part, consider Proposition \ref{prop4}. 
It states that $A \cup K_i$ has full $\omega$-rank. And so by Proposition \ref{prop2} 
$\bigcup_{j \neq i} (A \cup K_j)~ = ~A \cup \bigcup_{j \neq i} K_j$ has full $\omega$-rank. 
\end{proof}

\noindent Now we can finally prove theorem \ref{maintheo2}. 
\begin{proof}
 First we want to show that the matroid $M[\emptyset,\bigcap \Gamma_{\alpha}]$ is connected. Let $A_1,\ldots,A_n$ be the connected components of
$M[\emptyset,\bigcap \Gamma_{\alpha}]$. The flat $\bigcap \Gamma_{\alpha}$ has full $\omega$-rank as an intersection of 
flacets with full $\omega$-rank. 
The same is true for its individual connected components by Proposition \ref{prop2}.  
So due to Proposition \ref{prop5} there exists $\Gamma_i \subseteq \Gamma$ for all $i$ such that $A_i=\bigcap \Gamma_i$. 
We know that $\emptyset~\neq~\alpha~\cap~A_i~=~\alpha \cap (\bigcap \Gamma_i)$. 
This implies that $\alpha \cap F \neq \emptyset$ for all $F \in \Gamma_i $, 
which is indeed equivalent to $\alpha \subseteq F $ for all $F \in \Gamma_i $. 
Therefore we know that $\alpha \subseteq \bigcap \Gamma_i = A_i$. 

Since the $A_i$ are pairwise disjoint, $\alpha$ must be contained in exactly one of them. 
Let us say this connected component is $A_i$ and $A_j$ is any other component disjoint to $\alpha$. Then,  
\begin{displaymath}\alpha \subseteq A_i = \bigcap \Gamma_i ~\Rightarrow ~ \Gamma_i \subseteq \Gamma_{\alpha} ~ \Rightarrow~\emptyset = A_j \cap A_i =
 A_j \cap \bigcap \Gamma_i \supseteq A_j \cap \bigcap \Gamma_{\alpha} = A_j .\end{displaymath}
So there must not exist any other connected component than $A_i$. Thus $M[\emptyset,\bigcap \Gamma_{\alpha}]$ is connected. 

\bigskip
\noindent Now we will see why we put all this work in the previous propositions. 
The requirements of Proposition \ref{prop4} are satisfied for 
\begin{itemize}
\item the matroid $M[\emptyset,\bigcap \Gamma_{\alpha}]$, 
\item the set $\bigcap \Gamma_{\alpha} ~\cap~ \bigcup\limits \Gamma_{\alpha}^C$, which equals 
$\bigcap \Gamma_{\alpha} \backslash \alpha $, as $A$, 
\item any connected component $K_i\subseteq \alpha$ of 
$M[\bigcap \Gamma_{\alpha}\backslash \alpha ,\bigcap \Gamma_{\alpha}]$,  
\item any
connected component $G$ of $M[\emptyset,(\bigcap \Gamma_{\alpha}\backslash \alpha)\cup K_i]$ intersecting 
$K_i~\subseteq~\alpha$. 
\end{itemize}
Proposition \ref{prop4} states that $G$ has full $\omega$-rank, too. In particular $G$ is a flat, due to Proposition \ref{prop1}, which is connected by 
construction as a connected component. So again with Proposition \ref{prop5} we obtain $G=\bigcap \Gamma'$ for some $\Gamma' \subseteq \Gamma$. 
We know that $\emptyset \neq K_i \cap G \subseteq \alpha \cap G= \alpha \cap (\bigcap \Gamma')$. 
This implies that $\alpha \cap F \neq \emptyset $ for all flacets $F \in \Gamma' $, 
which is already equivalent to $\alpha \subseteq F$ for all flacets $F \in \Gamma' $. 
Therefore we know that $\Gamma' \subseteq \Gamma_{\alpha}$, which yields to $\bigcap \Gamma_{\alpha} \subseteq \bigcap \Gamma'$.  

\noindent So we can conclude about $G$: 
\begin{displaymath}\bigcap \Gamma_{\alpha} ~\subseteq~ \bigcap \Gamma' ~= ~G ~ \subseteq ~ (\bigcap \Gamma_{\alpha} ~\cap~ \bigcup\limits \Gamma_{\alpha}^C)\cup K_i ~ 
\subseteq ~\bigcap \Gamma_{\alpha}.\end{displaymath}
Thus $G$ equals $\bigcap \Gamma_{\alpha}$ and $K_i$ equals $\alpha$. Therefore 
$M[\bigcap \Gamma_{\alpha}\backslash \alpha, \bigcap \Gamma_{\alpha}]$ has only one connected component. 
\end{proof}

\begin{corollary}
The decomposition of the matroid type in \ref{maintheo1} 
for a face of the order complex is coarser than the decomposition of the matroid type of its supporting face in the 
nested set complex. 

The decomposition of the matroid type for a face of the nested set complex is coarser than the 
decomposition of the matroid type of its supporting face in the Bergman complex.
\end{corollary}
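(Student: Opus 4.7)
My plan is to reduce the corollary to a comparison of the ground-set partitions $\Pi(\Gamma)$ attached to the three vertex sets, using the characterisation $x \sim_{\Pi(\Gamma)} y$ iff $x \in H \Leftrightarrow y \in H$ for every $H \in \Gamma$. The summand of the decomposition in Theorem~\ref{maintheo1} indexed by a block $\alpha$ of $\Pi(\Gamma)$ has ground set exactly $\alpha$, and all three decompositions express the same matroid $M_\omega$; so whenever $\Pi(\Gamma')$ refines $\Pi(\Gamma)$, each block $\alpha$ of $\Pi(\Gamma)$ is a disjoint union of blocks $\beta \subseteq \alpha$ of $\Pi(\Gamma')$, and the restriction of $M_\omega$ to $\alpha$ equals $\bigoplus_{\beta\subseteq\alpha} N'_\beta$, forcing the $\Gamma'$-decomposition to refine the $\Gamma$-decomposition. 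So it suffices to prove that $\Pi(\Gamma_{\text{nest}})$ refines $\Pi(\Gamma_{\text{ord}})$ and that $\Pi(\Gamma_{\text{Berg}})$ refines $\Pi(\Gamma_{\text{nest}})$.

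For the first refinement it is enough to show that every $F \in \Gamma_{\text{ord}}$ is a disjoint union of certain elements of $\Gamma_{\text{nest}}$: then $x \sim_{\Pi(\Gamma_{\text{nest}})} y$ propagates to membership in any such union $F$, yielding $x \sim_{\Pi(\Gamma_{\text{ord}})} y$. The natural candidate is $F = \bigsqcup_i G_i$ where the $G_i$ are the connected components of the restriction $M[\emptyset, F]$; each $G_i$ is a connected flat and, by Propositions~\ref{prop1} and~\ref{prop2} applied to the full-$\omega$-rank set $F$, has full $\omega$-rank itself. To identify these $G_i$ as vertices of $\Gamma_{\text{nest}}$ I would expand $\omega = \sum_{F \in \Gamma_{\text{ord}}} \lambda_F e_F$ via the elementary identity $e_F = \sum_i e_{G_i}$ and then invoke the uniqueness of the positive expression of $\omega$ over a simplex of the (simplicial) nested set complex. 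Alternatively, the Feichtner and M\"uller stellar subdivision description \cite{FM05} gives the same conclusion, since the step that passes from a building set $\mathcal{G}$ to $\mathcal{G} \cup \{X\}$ subdivides precisely the simplex of factors of $X$ by adding $X$.

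For the second refinement Proposition~\ref{prop5} is ready-made: every $G \in \Gamma_{\text{nest}}$, being a connected flat of full $\omega$-rank, is an intersection of flacets of full $\omega$-rank, hence of elements of $\Gamma_{\text{Berg}}$. Writing $G = \bigcap_i F_i$ with $F_i \in \Gamma_{\text{Berg}}$, the assumption $x \sim_{\Pi(\Gamma_{\text{Berg}})} y$ gives $x \in F_i \Leftrightarrow y \in F_i$ for each $i$ and therefore $x \in G \Leftrightarrow y \in G$, completing this half. The main obstacle I anticipate is the first part: cleanly certifying that the connected components of each $F \in \Gamma_{\text{ord}}$ really appear among the vertices of the supporting nested set face (rather than some larger or smaller set of connected flats). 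Once that inclusion is pinned down, the partition comparisons and the translation back to direct-sum decompositions are short bookkeeping.
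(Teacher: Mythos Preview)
Your proposal is correct and shares the paper's overall strategy of comparing the partitions $\Pi(\Gamma)$ attached to the three vertex sets. For the first refinement you and the paper both rely on the Feichtner--M\"uller result \cite{FM05} that the nested set supporting a chain consists precisely of the connected components of the chain elements; your ``main obstacle'' is thus already resolved by the reference you cite, so there is no gap.

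The genuine difference lies in the second refinement. The paper argues top-down: by Theorem~\ref{maintheo2} the Bergman summands are already connected, hence the Bergman decomposition is the decomposition of $M_\omega$ into connected components, and any direct-sum decomposition (in particular the nested-set one) is automatically coarser. You instead argue bottom-up via Proposition~\ref{prop5}, writing each nested-set vertex $G$ as an intersection of full-$\omega$-rank flacets and deducing the partition refinement directly. Your route is more self-contained in that it only needs Proposition~\ref{prop5} rather than the full strength of Theorem~\ref{maintheo2} (whose proof is considerably longer); the paper's route is shorter on the page because Theorem~\ref{maintheo2} has already been established and the ``connected components give the finest decomposition'' principle is immediate. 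Both are valid, and your argument has the mild advantage of making the corollary logically independent of Theorem~\ref{maintheo2}.
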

\begin{proof}
 We just have to compare the partitions $\bigvee_{F \in \Gamma} (F | E(M)- F)$ for the different sets of vertices. 
Feichtner and Müller \cite{FM05} showed that the nested set which is the support of the chain consists of the connected components of 
the chain elements. So the partition is becoming finer when passing to more vertices. 
Due to \ref{maintheo2} the direct summands for faces of the Bergman complex are connected, so this must be the finest decomposition. 
\end{proof}
Note that these relations are strict iff the dimension of the faces is increasing while taking the support face in the next coarser complex. 
So for maximal faces the decompositions are all the same. Using this we can easily see that the maximal faces of the Bergman complex correspond 
to transversal matroids \textit{i.e.} direct sums of matroids of rank 1.

\section{Taking connected components}
\label{sec6}
\label{ausblick}
For the following the term sublattice is meant to be including the elements $\hat{0}$ and $\hat{1}$. 

Last but not least i want to share some thoughts about further research in this direction. 
Remember the different levels of subdivisions of the Bergman complex. 
The finest is the order complex, then there are the nested set complexes respective various building sets and the coarsest is the Bergman complex itself. 
For a chain $\emptyset =c_0<\ldots c_k<c_{k+1}=E(M)$ in the geometric lattice the smallest sublattice containing this chain 
and no additional requirements $\mathcal{L}_C$ is the chain itself. 

With each nested set we can identify the smallest sublattice $\mathcal{L}_S$ containing $S$ and fulfilling the property: 
If $A\in \mathcal{L}$ then all connected components of the interval $[\emptyset,A]$ are in $\mathcal{L} $ as well. 
This special sublattice $\mathcal{L}_S$ is exactly the set of flats whose connected components are elements of $S$. 

Now for a matroid type $\Mw$ consider the set of flats with $\omega$-full rank $\mathcal{L}_{\omega}$. 
If $\Gamma$ are the vertices of the face of the Bergman complex corresponding to this matroid type, 
$\Gamma$ are exactly the flacets of $M$ contained in $\mathcal{L}_{\omega} $. 
Additionally we can see $\mathcal{L}_{\omega} $ as the smallest sublattice containing $\Gamma$ and fulfilling the property:
If $A<B\in \mathcal{L}$ then all connected components of the interval $[A,B]$ are in $\mathcal{L} $ as well. 

So all the faces of the different levels of subdivisions can be seen as induced sublattices with certain properties 
which can be described purely 
in terms of lattice theory. 
This interpretation even fits together with the subdivisions. 
Starting with a chain $C$ the nested set, which is the support of this chain, consists of the connected flats, 
which are contained in the smallest 
sublattice containing $C$ and being closed under taking connected components (with left interval border~$\emptyset$) like above. 

The same way starting with a nested set $S$ the corresponding sublattice of the supporting matroid type is the smallest sublattice 
containing $S$ which is closed under taking any connected components.  

Now these descriptions of the different faces as special sublattices are made just in terms of order theory. 
They can of course be generalized to arbitrary lattices (no geometric lattices anymore). 
The question would be if this "Bergman complex" is still homotopy equivalent to the order complex. 
For me it feels like this could be true, but there are two major problems left yet. 
On the one hand not every sublattice closed under taking connected components is indeed a face of the Bergman complex and 
I have not found 
an acceptable condition for them to be.  
On the other hand in some examples the reduction of faces works "too" good. 
The question would be what kind of complex the result actually is. 
There are pathological examples where the order complex is just a subdivided circle and 
the Bergman complex consists of just one 1-dimensional face 
and no vertices at all.  

\newpage
\bibliography{BergmanReview}{}
\bibliographystyle{amsalpha}
\end{document}